\documentclass[12pt, a4paper,leqno,oneside]{amschanged}

\usepackage{hyperref}

%\usepackage[notref,notcite,color]{showkeys}

%<<< Preamble
\usepackage{amsmath}
\usepackage{amssymb}
\usepackage{amsthm}

\usepackage{atbeginend}

\usepackage{psfrag}
\usepackage{graphicx}
\usepackage{hyperref}
\hypersetup{
    colorlinks,%
    citecolor=black,%
    filecolor=black,%
    linkcolor=black,%
    urlcolor=black
}

\usepackage{wasysym}
\usepackage{fullpage}

\pagestyle{plain}

\usepackage{type1cm}

\newtheorem{theorem}{Theorem}[section]

\newtheorem{proposition}[theorem]{Proposition}
\newtheorem{lemma}[theorem]{Lemma}

\numberwithin{equation}{section}

\theoremstyle{remark}
\newtheorem{remark}[theorem]{Remark}

\def\supp{\mathop{\rm supp}\nolimits}
\def\dist{\mathop{\rm dist}\nolimits}

\def\hex{\hexagon}

\overfullrule 1mm
%>>>
\usepackage{titlesec}
\titleformat{\section}{\Large\bfseries}{\thesection}{1em}{}
\titleformat{\subsection}{\bfseries}{\thesubsection}{1em}{}

%\definecolor{orange}{rgb}{1,0.5,0}
\usepackage{courier}

%%%%%%%%%%%%%%%%%%%%%%%%%%% array for equations alignment %%%%%%%%%%%%%%%%%%%%%%%
\usepackage{array}
\newcolumntype{e}{>{\displaystyle}r @{\,} >{\displaystyle}c @{\,} >{\displaystyle}l}
%%%%%%%%%%%%%%%%%%%%%%%%%%%%%%%%%%%%%%%%%%%%%%%%%%%%%%%%%%%%%%%%%%%%%%%%%%%%%%%%%

%%%%%%%%%%%%%%%%%%%%%%%%%%%%%%%%%%%%%% Defining new constants %%%%%%%%%%%%%%%%%%%
  \newcounter{constant}
  \newcommand{\newconstant}[1]{\refstepcounter{constant}\label{#1}}
  \newcommand{\useconstant}[1]{c_{\textnormal{\tiny \ref{#1}}}}
  \setcounter{constant}{-1}
%%%%%%%%%%%%%%%%%%%%%%%%%%%%%%%%%%%%%%%%%%%%%%%%%%%%%%%%%%%%%%%%%%%%%%%%%%%%%%%%%

%%%%%%%%%%%%%%%%%%%%%%%%%%%%%%%%%%%%% mathclap %%%%%%%%%%%%%%%%%%%%%%%%%%%%%%%%%%
\def\clap#1{\hbox to 0pt{\hss#1\hss}}

\def\mathclap{\mathpalette\mathclapinternal}

\def\mathclapinternal#1#2{\clap{$\mathsurround=0pt#1{#2}$}}
%%%%%%%%%%%%%%%%%%%%%%%%%%%%%%%%%%%%%%%%%%%%%%%%%%%%%%%%%%%%%%%%%%%%%%%%%%%%%%%%%

\begin{document}

\fontsize{12}{14}\rm
\addtolength{\abovedisplayskip}{.5mm}
\addtolength{\belowdisplayskip}{.5mm}
\AfterBegin{enumerate}{\addtolength{\itemsep}{2mm}}

%Random walk on the torus and random interlacements - the revenge
%Vacant set of a random walk on the torus and random interlacements
%Distinct phases for the vacant set left by a random walk on a torus
%Distinct phases for the vacant set left by a random walk on a torus and random interlacements
%Distinct phases for the complement of a random walk trajectory on a torus and random interlacements

%\title{\LARGE \usefont{T1}{tnr}{b}{n} \selectfont P\lowercase{ercolation for the complementary set of a \uppercase{P}oissonian ensemble of cylinders in $3$ dimensions}} %%\vspace{10mm}

\title{\LARGE \usefont{T1}{tnr}{b}{n} \selectfont C\lowercase{ylinders' percolation in three dimensions}} %%\vspace{10mm}

\author{\normalsize \itshape M. H\lowercase{il\'ario} $^1$ \color{white} \tiny and}
\address{$^1$ Universidade Federal de Minas Gerais, Departamento de Matem\'atica, \newline \hspace*{10mm} Belo Horizonte 31270-901, Brazil {\itshape \texttt{mhilario@mat.ufmg.br}}.}

\author{\color{black} \normalsize \itshape V. S\lowercase{idoravicius} $^2$ \color{white} \tiny}
\address{$^2$ Instituto Nacional de Matem\'atica Pura e Aplicada, \newline \hspace*{10mm} Rio de Janeiro 22460-320, Brazil.}

\author{\color{black} \normalsize \itshape A. T\lowercase{eixeira} $^2$ $^3$}
\address{$^3$ \'Ecole Normale Sup\'erieure, D\'epartement de Math\'ematiques et Applications, \newline \hspace*{10mm} Paris 75230, France {\itshape \texttt{augusto.teixeira@ens.fr}}.}

\date{\today}

\begin{abstract}
We study the complementary set of a Poissonian ensemble of infinite cylinders in $\mathbb{R}^3$, for which an intensity parameter $u > 0$ controls the amount of cylinders to be removed from the ambient space.
We establish a non-trivial phase transition, for the existence of an unbounded connected component of this set, as $u$ crosses a critical non-degenerate intensity $u_*$.
We moreover show that this complementary set percolates in a sufficiently thick slab, in spite of the fact that it does not percolate in any given plane of $\mathbb{R}^3$, regardless of the choice of $u$.
\end{abstract}

\maketitle

\section{Introduction}
\label{s:intro}

In this article we study percolation on the subset obtained by removing from $\mathbb{R}^3$ a Poissonian ensemble of infinite cylinders of radius one.
Before presenting our main results, let us give some of the motivation and historical background to this problem.

% In this article we study percolation on the subset of $\mathbb{R}^3$, obtained by removing from $\mathbb{R}^3$ a Poissonian ensemble of infinite cylinders of radius one.
% Before presenting the main results of this paper, let us give some of the motivation and historical background to this problem.

Perhaps the simplest model for a random environment in $\mathbb{R}^d$ is the so-called ``continuum'' (or Boolean) percolation, in which a Poissonian ensemble of unit balls is placed in $\mathbb{R}^d$.
Each one of the balls can be thought as being an obstacle.
% Our primary question, given such a random configuration of obstacles, is whether their complementary set $\mathcal{V}$ (sometimes called ``vacant set'' or ``carpet'', see \cite{Szn09} and \cite{NW11}) contains or not an unbounded connected component with positive probability.
Letting $\mathcal{V}$ stand for the complement of this random set of obstacles (which is sometimes called ``vacant set'' or ``carpet'', see \cite{Szn09} and \cite{NW11}), the primary question one can ask is whether $\mathcal{V}$ contains or not an unbounded connected component with positive probability.
If so, one says that the vacant set $\mathcal{V}$ percolates.
Due to the uniformly boundedness of the obstacles, a number of techniques developed in the study of Bernoulli site percolation can be adapted to this continuum case, see for instance \cite{Gri99}, Section~12.10.

However, for other models containing large obstacles, the induced random environment may feature long-range dependencies, often leading to some intriguing behavior and challenging problems such as in \cite{Hall85}.
We now describe some instances of such models.

K.~Symanzik in his seminal work \cite{Szy69}, introduced a representation of the $\phi^4$ quantum field as a classical gas of Brownian paths which interact when they cross.
This development naturally led to the ideas of loop measures, whose geometry have been intensively investigated, both for planar Brownian motion in relation with SLE processes in \cite{LW04} and for simple random walks in \cite{LF07}.
See also \cite{NW11} and the excellent study in \cite{LeJan08}.
In three dimensions the current knowledge of those models is more restricted, except for the work \cite{RW11}, concerned with the percolative properties of the Brownian loop soup in $\mathbb{R}^3$.

%The main difficulty in approaching such question comes from the long-range dependence introduced by the unboundedness of obstacles.

Recently other interesting models took a central stage in the field of random media.
Notably, {\it the random interlacements} on $\mathbb{Z}^d$, $d \ge 3$, introduced by A.-S. Sznitman in \cite{Szn09}.
For this model, the set of obstacles consist of a Poissonian cloud of bi-infinite random walk trajectories modulo time shift.
An intensity parameter $u \geq 0$ controls the amount of trajectories to be removed from the ambient space $\mathbb{Z}^d$ and the complement of these trajectories (the so-called vacant set of random interlacements) was extensively studied in \cite{SS09, SS10, T10, Tei09b}.
It was shown in \cite{Szn09} and \cite{SS09} that the connectivity of the vacant set undergoes a non-trivial phase transition as $u$ crosses a critical threshold.

Another percolation model having similar features is the so-called coordinate percolation, introduced by the second author.
In this model each discrete line parallel to one of the coordinate axis of $\mathbb{Z}^3$ is independently removed with a positive probability $q = 1-p$, and retained with probability $0< p <1$. 
In \cite{Hil11} it was shown that the vacant set left after the removal of lines undergoes a non-trivial phase transition as $p$ varies.
It is important to stress that this model has polynomial decay of connectivity in the super-critical and upper sub-critical phases, see Remark~\ref{r:diff} below.
Polynomial decay and several other properties of coordinate percolation  in $\mathbb{Z}^3$ are remarkably similar to that of P. Winkler's percolation models, see \cite{Gacs00}, including the question of the compatibility of binary sequences and scheduling of random walks (also known as the clairvoyant daemon problem). 

In this article we study a
model governed by a Poisson point process, on the space $\mathbb{L}$ of lines in $\mathbb{R}^d$, having intensity measure $u \mu$.
Here $u$ is a positive real parameter and $\mu$ is, up to a multiplicative constant, the unique Haar measure in $\mathbb{L}$ which is invariant with respect to isometries of $\mathbb{R}^d$, see \eqref{e:mu} for details.
% The model was introduced by I. Benjamini and first studied by J.~Tykesson and D.~Windisch in \cite{TW10b}, where among other results they established the existence of a phase transition for the vacant set left by these cylinders in $\mathbb{R}^d$, for $d \geq 4$.

% In this paper we consider the model of cylinders' percolation, recently introduced by J.~Tykesson and D.~Windisch in \cite{TW10b}, which is governed by a Poisson point process on the space $\mathbb{L}$ of lines in $\mathbb{R}^d$.
% This Poisson process has intensity measure $u \mu$, where $u$ is a positive real parameter and $\mu$ is, up to a multiplicative constant, the unique $\sigma$-finite measure in $\mathbb{L}$ which is invariant with respect to isometries of $\mathbb{R}^d$, see \eqref{e:mu} for details.
% 

Having specified the intensity measure $u \mu$ (see \eqref{e:mu}), a corresponding Poisson point process can be easily constructed in an appropriate probability space $(\Omega, \mathcal{A}, \mathbb{P}_u)$, as we describe in Section~\ref{s:notation}. 
Each element $\omega \in \Omega$ is a point measure, i.e.
\begin{equation}
\omega = {\sum_{i \geq 0}} \delta_{l_i}, \text{ where $l_i$ runs over a countable collection of lines in $\mathbb{R}^d$}.
\end{equation}
We are mainly interested in the set
\begin{equation}
\label{e:Lomega}
\mathcal{L}(\omega) = {\textstyle \bigcup \limits_{l \in \text{supp}(\omega)}} C(l),
\end{equation}
where $C(l)$ stands for the cylinder of radius one around $l$. As well as its complement
\begin{equation}
\label{e:Vomega}
\mathcal{V}(\omega) = \mathbb{R}^d \setminus \mathcal{L}(\omega),
\end{equation}
the so-called `vacant set'.
Intuitively speaking, the set $\mathcal{V}$ represents what is left after we drill through all the lines in the support of $\omega$. 
As for the parameter $u$, it controls the amount of cylinders to be removed from $\mathbb{R}^3$: as $u$ increases, more and more cylinders are drilled, making it increasingly harder for $\mathcal{V}$ to be well connected.

The main contribution of this paper is to prove the following
\begin{theorem}
\label{e:main}
\textnormal{($d=3$)} For $u$ small enough, the vacant set $\mathcal{V}$ contains almost surely an unbounded connected component.
\end{theorem}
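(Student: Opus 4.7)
My plan is to establish Theorem~\ref{e:main} by a multi-scale renormalization argument combined with a sprinkling decomposition of the Poisson line process, producing an infinite cluster of ``good'' blocks at a suitable base scale and deducing from it an infinite component of $\mathcal{V}$. Once percolation is established with positive probability, ergodicity of $\mathbb{P}_u$ under translations of $\mathbb{R}^3$ upgrades it to probability one.

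I would first fix a base length $L_0$, cover $\mathbb{R}^3$ by boxes $B_z = L_0 z + [-L_0/2,L_0/2]^3$ for $z \in \mathbb{Z}^3$, and declare $z$ \emph{good} if no cylinder whose axis passes within distance $10 L_0$ of $B_z$ intersects $B_z$ in a way that separates $B_z$ from the common faces it shares with its six lattice neighbours. Since only lines whose axis is within $O(L_0)$ of $B_z$ can influence this local event, and the $\mu$-measure of such lines is of order $L_0^2$, one has $\mathbb{P}_u(z \text{ good}) \to 1$ as $u\to 0$ with $L_0$ held fixed. A connected cluster of good boxes therefore corresponds to a connected region of $\mathcal{V}$, so it suffices to obtain an infinite good cluster.

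The key difficulty is that these local events are not finite-range: a single line in the support of $\omega$ can touch arbitrarily many boxes, inducing long-range positive correlations. In $\mathbb{R}^3$ the $\mu$-measure of the lines that intersect two boxes of side $L$ at mutual distance $D$ is of order $L^4/D^2$, so correlations between local good events decay only polynomially. To overcome this I would set up a multi-scale cascade: choose a growing sequence of scales $L_n = \ell_n L_{n-1}$, and split the intensity as $u = u_0 + u_1 + u_2 + \cdots$ into a summable sprinkling schedule, using the $n$-th part only to close the $n$-th renormalization step via independent Poissonian perturbation. At each scale I would define recursively a good event $G_z^{(n)}$ (in terms of an admissible concatenation of scale-$(n-1)$ good events on sub-boxes of $B_z^{(n)}$), arranged so that $B_z^{(n)}$ not good forces at least two non-adjacent scale-$(n-1)$ sub-boxes to fail $G^{(n-1)}$. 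This yields a recursion of the form $p_n \leq C\, p_{n-1}^{\,2} + \varepsilon_n$, where $p_n = \mathbb{P}_u(\overline{G_z^{(n)}})$ and $\varepsilon_n$ is the decoupling error produced by cylinders whose axes pass through two widely separated sub-boxes.

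The main obstacle is the delicate tuning of the scale ratios $\ell_n$ and of the sprinkling schedule so that, on the one hand, $\varepsilon_n$ is dominated by $p_{n-1}^{\,2}$ at every step (using the $L^4/D^2$ estimate on pairs together with the extra independent cylinders provided by the sprinkling), and on the other hand $\sum_n u_n$ stays below the target intensity $u$. Closing this induction gives $p_n \to 0$ super-exponentially, from which a standard dynamic renormalization argument yields, with positive probability, an infinite cluster of good scale-$0$ boxes containing the origin, hence an infinite component of $\mathcal{V}$. Ergodicity then finishes the proof.
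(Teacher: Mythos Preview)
Your overall architecture --- multiscale renormalization driving a recursion of the form $p_n \lesssim (\text{combinatorics})\,[p_{n-1}^{2} + \varepsilon_n]$ --- is the right genre, but in $d=3$ this scheme is exactly critical and does not close with the tools you invoke. The $\mu$-measure of lines meeting two regions of size $L$ at mutual distance $D$ is of order $(L/D)^{2}$ here, and when balanced against the combinatorial factor for choosing the two failing sub-blocks the product is $O(1)$; this is precisely why the Tykesson--Windisch argument yields $u_*>0$ only for $d\ge 4$ (cf.\ Remark~\ref{r:track}). You propose to rescue the recursion by sprinkling, but no decoupling inequality of Sznitman type has been established (or even formulated) for the Poisson cylinder process, and you give no mechanism by which a summable extra intensity would stochastically dominate the lines that simultaneously traverse two distant sub-blocks --- those lines have very constrained directions. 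Note also that your ``good block'' events are \emph{decreasing} in $\omega$, so adding mass (the usual direction of sprinkling) works against you; a genuine two-sided decoupling would be required, and that is exactly the missing ingredient.

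The paper's route differs from yours in two essential respects. First, rather than working with $3$-dimensional block events, it restricts $\mathcal{V}$ to a specific rough periodic surface $H\subset\mathbb{R}^2\times[0,1000]$ (the graph of the distance to a hexagonal net) and uses planar duality on $H$: percolation of $\mathcal{V}\cap H$ is reduced to showing that long crossings in $\pi(\mathcal{L}\cap H)$ are unlikely. Second --- and this is the idea that replaces your sprinkling --- it introduces an auxiliary sequence $q_n$, defined like $p_n$ but with two \emph{deterministic} lines added to $\omega$, and splits the correlation between two distant scale-$(n-1)$ events according to whether $0$, $1$--$2$, or $\ge 3$ common cylinders are present. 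The dangerous middle case is absorbed into $q_{n-1}$, yielding a \emph{coupled} contracting recursion for the pair $(p_n,q_n)$. The roughness of $H$ is indispensable for triggering: on $H$ (unlike on $\mathbb{R}^2$, where by \eqref{e:noplane} there is no percolative phase at all) two cylinders alone cannot produce a long path, so $q_0(u)\to 0$ as $u\to 0$. Your proposal contains neither device, and without them the $d=3$ induction does not close.
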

See Theorem~\ref{t:main} below for a stronger version of this statement. If one defines the critical parameter by
\begin{equation}
\label{e:ustar}
u_* = \inf\{u \geq 0; \; \mathbb{P}_u [ \text{$\mathcal{V}$ has an unbounded connected component} ] = 0 \},
\end{equation}
then Theorem~\ref{e:main} proves that $u_*$ is strictly positive.

The model was introduced by I.~Benjamini and first studied by J.~Tykesson and D.~Windisch in \cite{TW10b}, where among other results they established the existence of a phase transition for the vacant set left by these cylinders in $\mathbb{R}^d$ when $d \geq 4$.
More specifically in Theorems~4.1 and 5.1 of \cite{TW10b}, they proved that
\begin{gather}
u_* < \infty, \text{ for every $d \geq 3$ and}\\
\label{e:dgeq4}
u_* > 0, \text{ for every $d \geq 4$.}
\end{gather}

The most challenging and physically relevant question concerns to the three-di\-men\-sional case, for which the existence of a percolative phase was still open.
Our result settles the existence of a non-trivial phase transition as the parameter $u$ crosses the non-degenerate threshold $u_*$: The super-critical or percolative phase is the one for which $u<u_*$ and the sub-critical phase is the one for which $u>u_*$.

One of the difficulties in establishing Theorem~\ref{e:main}, is the slow decay of correlations observed in the set $\mathcal{V}$.
As it was observed in Remark~3.2~(4) of \cite{TW10b}, for any $x,y \in \mathbb{R}^d$ with $|x-y| > 2$,
\begin{equation}
\label{e:depend}
\frac{c_{d,u}}{|x-y|^{d-1}} \leq \text{cov}_u(\mathbf{1}_{x \in \mathcal{V}}, \mathbf{1}_{y \in \mathcal{V}}) \leq \frac{c'_{d,u}}{|x-y|^{d-1}},
\end{equation}
where $c_{d,y}$ and $c'_{d,u}$ are positive constants depending on $u$ and $d$ and $\text{cov}_u$ stands for the covariance under the measure $\mathbb{P}_u$.
From \eqref{e:depend} it is clear that in low dimensions the vacant set $\mathcal{V}$ presents a slower decay of correlations, what makes the problem more challenging.

It is worth noticing that an equation similar to \eqref{e:depend} also holds for the vacant set left by random interlacements, but with the exponent $d-1$ replaced by $d-2$, see Remark~1.6 4) in \cite{Szn09}.
We note that also in the case of interlacements, the low dimensional cases are harder.
Indeed, the existence of a percolative phase for random interlacements was first established for $d \geq 7$, in \cite{Szn09}, but only later this result was extended to $d \geq 3$, see \cite{SS09}.

Another difficulty that appears in the present context is the absence of exponential bounds or domination by Boolean percolation (see Remark~\ref{r:diff}).

% We now explain what we perceive as the main obstacle to prove Theorem~\ref{e:main} and why the three dimensional case is qualitatively different from the others.
% For that, let us briefly describe how the case $d \geq 4$ was handled in \cite{TW10b}.

In order to state what we perceive as the main difficulty to prove  Theorem~\ref{e:main} and to explain why the three dimensional case is qualitatively different from the others, let us briefly describe how the case $d \geq 4$ was handled in \cite{TW10b}.
In that work, the authors restricted their attention to the intersection between $\mathcal{V}$ and $\mathbb{R}^2$ (naturally embedded in $\mathbb{R}^d$).
A similar procedure was also employed in the context of interlacements percolation in \cite{SS09}.
In Theorem~5.1 of \cite{TW10b}, the authors proved that for $d \geq 4$ and for $u$ small enough there exists $\mathbb{P}_u$-a{.}s{.} an unbounded connected component in $\mathcal{V} \cap \mathbb{R}^2$, yielding \eqref{e:dgeq4}.
However, as they also observed, this strategy is destined to fail in three dimensions, as
\begin{equation}
\label{e:noplane}
\begin{array}{c}
\text{for $d = 3$, for every $u > 0$, the set $\mathcal{V}\cap \mathbb{R}^2$ contains}\\
\text{$\mathbb{P}_u$-a.s{.} no unbounded connected component.}
\end{array}
\end{equation}
see Proposition~5.6 of \cite{TW10b}.
%More than that, they proved that $\mathbb{P}_u$-almost surely there are infinitely many `triangle-like' shaped circuits surrounding the origin in $\mathcal{L} \cap \mathbb{R}^2$, all of them contained in the intersection of three cylinders in $\mathcal{L}$ with $\mathbb{R}^2$.

% Let us now give a brief description of the strategy to prove our main result.
In view of \eqref{e:noplane}, in order to establish Theorem~\ref{e:main} we have to search for connections outside the plane $\mathbb{R}^2$.
But, first of all, why would someone be interested in restricting the set $\mathcal{V}$ to $\mathbb{R}^2$?
This is done in order to use the so-called `path duality' of the plane, which roughly speaking, states that
\begin{equation}
\label{e:duality}
\begin{array}{c}
\text{if the connected component of $\mathcal{V}\cap\mathbb{R}^2$ containing the origin is bounded,}\\
\text{then there exists a circuit surrounding the origin in $\mathcal{L} \cap \mathbb{R}^2$,}
\end{array}
\end{equation}
see (5.22) of \cite{TW10b}.
The above statement reduces the task of proving percolation to showing that typical paths in $\mathcal{L} \cap \mathbb{R}^2$ are small.
In our case, we will make use of a statement similar to \eqref{e:duality}, see \eqref{e:Percsigma}.
However, instead of $\mathbb{R}^2$, we will intersect $\mathcal{V}$ with a periodic surface $H$ defined in \eqref{e:H}, see also Figure~\ref{f:setH}.
This surface is contained in the slab $\mathbb{R}^2 \times [0,1000]$ and has two important properties.
First, $H$ is homeomorphic to $\mathbb{R}^2$, which allows us to use duality on $H$ in an indirect way.
Moreover, $H$ is `rough', meaning that its intersection with any fixed cylinder gives rise to small connected components only, see \eqref{e:smallpieces}.

It is striking that there is never percolation on $\mathcal{V} \cap \mathbb{R}^2$, but it is even more surprising that, at the same time, $\mathcal{V} \cap H$ does percolate, as shown in Theorem~\ref{t:main}.
This contrast between the behavior of the random sets $\mathcal{V} \cap \mathbb{R}^2$ and $\mathcal{V}\cap H$ (both satisfying \eqref{e:depend}) is further discussed in Remark~\ref{r:compare}, raising the following question: What property of a given Poissonian cloud of obstacles prevents the existence of a percolative regime?
We hope that this work will bring attention to this question.

% The proof that $\mathcal{V} \cap H$ percolates follows a renormalization argument that keeps track of crossing probabilities (by  paths in $\mathcal{L} \cap H$) at various different scales.
% What we would like to obtain in order to carry out this renormalization scheme is an approximate independence between what happens at two far apart sets $K$ and $K'$ in $H$.
% Roughly speaking, the dependence between events on $K$ and $K'$ can be `dominated' by the introduction of two deterministic cylinders in the Poisson point process.
% This is the point where the roughness of $H$ plays a crucial role, distinguishing it from the plane $\mathbb{R}^2$, see the comment below Proposition~\ref{prop:pq_tozero}.
Let us briefly explain why is it that $\mathcal{V} \cap \mathbb{R}^2$ never percolates.
In \cite{TW10b}, the authors show that no matter how small $u$ is taken, there are infinitely many triangles (contained in the union of exactly three cylinders in $\mathcal{L}$) that surround the origin in $\mathcal{L} \cap \mathbb{R}^2$.
This is intuitive, since for small values of $u$ we don't expect that several cylinders could cooperate in creating a long dual path.
Therefore, the only way to prevent percolation on $\mathcal{V} \cap \mathbb{R}^2$ is indeed to have few cylinders that alone manage to create a long dual circuit around the origin.
This is certainly possible in $\mathbb{R}^2$, but not in the surface $H$, due to its roughness, see \eqref{e:smallpieces}.
The renormalization scheme developed in Section~\ref{s:renorm} allows us to formalize this heuristic argument, providing a way to isolate the collective and individual influence of obstacles.

Next we briefly explain the novelties on the renormalization technique presented here.
We first define a rapidly increasing sequence of scale lengths $(a_n)_{n \geq 0}$, see \eqref{eq:scal}. 
Our aim is to analyze the probability $p_n$ that
\begin{equation}
\label{e:pn_describe}
\begin{array}{c}
\text{there exists some path in } \mathcal{L}\cap H \text{ connecting the ball of radius } \\
a_n/10\text{ to the surface of the ball of radius } a_n \text{ around the origin.}
\end{array}
\end{equation}
It can be easily seen that the event in \eqref{e:pn_describe} implies the occurrence of similar events in two smaller balls (of radius $a_{n-1}$) which are far apart, see \eqref{e:conn_inc}.
It is therefore tempting to bound $p_n$ in terms of $p_{n-1}^2$, but for this we need an approximate independence between what happens to $\mathcal{V}$ inside these two smaller balls.
In \cite{TW10b}, the authors accomplish this by plugging in a bound on this dependence which resembles \eqref{e:depend}.
This is enough to establish the result for $d \geq 4$, but for $d = 3$, the problem is fundamentally more complicated, as we can infer from \eqref{e:noplane}.

At this point, we introduce an auxiliary sequence $q_n$ that corresponds to the probability of the events appearing in \eqref{e:pn_describe} with two cylinders being deterministically added to the random set $\mathcal{L}(\omega)$.
A delicate balance between the probability that two distant balls intersect the same cylinders and a combinatorial factor for the possible choices of these two balls makes it possible to construct a contracting recursion relation between $(p_n, q_n)$ and $(p_{n-1}, q_{n-1})$.
Finally, we use the roughness of $H$ to trigger these recursion relations, i.e. show that $p_0$ and $q_0$ are small if $u$ is small, finishing the proof of Theorem~\ref{e:main}.

This paper is organized as follows:
In Section~\ref{s:notation} we give a rigorous construction of the model and introduce the notation used throughout the text.
In Section~\ref{s:renorm} we state Theorem~\ref{t:main} which is our main result and introduce the mathematical setting for the renormalization used in its proof, finishing with recurrence relations between scales.
Section~\ref{s:trigger} is dedicated to triggering the recurrence relations obtained previously.
Finally, in Section~\ref{s:main} we join the results of the two previous sections in order to prove Theorem~\ref{t:main}.
We also include an Appendix, where we prove some basic geometric facts that are useful in the proof of the recursion relations.

\section{Notation}
\label{s:notation}

Throughout the text $c$ or $c'$ denote strictly positive constants, with value changing from place to place.
Dependence of constants on additional parameters appears in the notation.
For instance $c_u$ denotes a positive constant possibly depending on $u$.
Numbered constants, such as $c_0, c_1, \dots$ are fixed according to their first appearance in the text.

% Using a slight abuse of notation, we write connected component in order to refer to path-connected component.
% Note that since $\mathcal{V} \subset \mathbb{R}^3$ is open, those two notions coincide for its components.

As we have mentioned in the last section, we let
\begin{equation}
\label{e:L}
\mathbb{L} \text{ denote the space of all $1$-dimensional affine subspaces of $\mathbb{R}^3$}.
\end{equation}

We introduce a measure $\mu$ in the space $\mathbb{L}$ of lines in $\mathbb{R}^3$ following the construction in \cite{TW10b}.
For this, let $e_i$, $i=1,2,3$, stand for the vectors of the canonical orthonormal basis of $\mathbb{R}^3$ and define $l$ to be the axis $\{t \cdot e_3; \; t \in \mathbb{R}\}$.
We also let $\mathbb{R}^2$ correspond in the natural way to the plane $\{(x,y,0); \; x, y \in \mathbb{R}\}$, orthogonal to $l$, endowed with the Lebesgue measure $\lambda$.
Consider also the group $SO_3$ of rigid rotations in $\mathbb{R}^3$ endowed with the natural topology and the unique Haar measure $\nu$ normalized in a way that $\nu(SO_3) = 1$.
Then we define
\begin{equation}
\begin{split}
\alpha: \mathbb{R}^2 \times SO_3 & \to \mathbb{L}\\
(x, \theta) & \mapsto \theta(\tau_x(l)),
\end{split}
\end{equation}
where $\tau_x$ is the translation map from $\mathbb{R}^3$ onto itself defined by $y \mapsto x + y$.

With this definition, we can endow the set $\mathbb{L}$ with the finest topology that makes the map $\alpha$ continuous.
Let $\mathcal{B}(\mathbb{L})$ stand for the corresponding Borel $\sigma$-algebra.
We can thus introduce the measure $\mu$ on $(\mathbb{L}, \mathcal{B}(\mathbb{L}))$:
%Then, we can introduce the measure $\mu$ on the space $(\mathbb{L}, \mathcal{B}(\mathbb{L}))$, where $\mathcal{B}(\mathbb{L})$ stands for the Borel $\sigma$-algebra associated to $\mathbb{L}$:
\begin{equation}
\label{e:mu}
\mu = \alpha(\lambda \otimes \nu).
\end{equation}
We note that $\mu$ is (up to multiplicative constants) the unique Haar measure on $\mathbb{L}$ which is invariant under isometries of $\mathbb{R}^3$.

We now consider the space of point measures
\begin{equation}
\begin{split}
\Omega  = \Big\{ \omega = \sum_{i \geq 0} \delta_{l_i}; \; & l_i \in \mathbb{L} \text{ and }
\omega(A) < \infty, \text{ for every compact $A \in \mathcal{B}(\mathbb{L})$} \Big\},
\end{split}
\end{equation}
endowed with the $\sigma$-algebra $\mathcal{A}$ generated by the evaluation maps $\phi_A: \omega \mapsto \omega(A)$, for $A \in \mathcal{B}(\mathbb{L})$.

We are now in the position to define the main process we intend to analyze.
For this, fix some $u \geq 0$ and define the probability space $(\Omega, \mathcal{A}, \mathbb{P}_u)$ of a Poisson point process with intensity measure given by $u \cdot \mu$.
The expectation operator associated with $\mathbb{P}_u$ will be denoted by $\mathbb{E}_u$.
For a reference for this construction, see for instance Proposition~3.6 of \cite{R08}.

Due to the fact that $\mu$ is invariant under the isometries of $\mathbb{R}^3$, one can show that the law $\mathbb{P}_u$ governing this Poisson point process is also invariant under such transformations (see Remark~2.1 of \cite{TW10b}).
Furthermore the law $\mathbb{P}_u$ can be shown to be ergodic under translations in the sense that will be described in Section \ref{s:main}.

The Euclidean distance in $\mathbb{R}^3$ or in $\mathbb{R}^2$ will be denoted by dist$(\cdot,\cdot)$.
For a point $x \in \mathbb{R}^3$ and $r>0$ we denote $B(x,r) = \{y \in \mathbb{R}^3; \text{ dist}(x,y)\leq r\}$ and for a set $A \subset \mathbb{R}^3$ we denote $B(A,r) = \cup_{x \in A}B(x,r)$.
For a line $l \in \mathbb{L}$ let $C(l) = B(l,1)$ be the cylinder of radius one and axis equal to $l$.
We denote by $\mathbb{C}$ the set of all cylinders of radius one: $\{C(l);~ l\in \mathbb{L}\}$.

We let $\mathcal{L}( \omega)$ be the `thickening' of the lines in the support of $\omega \in \Omega$, i.e.
\begin{equation}
\mathcal{L}(\omega) = \bigcup_{l \in \supp(\omega)} C(l),
\end{equation}
as well as its complement
\begin{equation}
\mathcal{V}(\omega) = \mathbb{R}^3 \setminus \mathcal{L}(\omega),
\end{equation}
also referred to as the `vacant set left by the cylinders'.

As proved in Proposition~5.6 of \cite{TW10b},
\begin{equation}
\label{e:noplane2}
\begin{array}{c}
\text{in $d = 3$, for every plane $K \subset \mathbb{R}^3$ and every $u > 0$, there is no}\\
\text{percolation in $\mathcal{V} \cap K$, almost surely with respect to $\mathbb{P}_u$.}
\end{array}
\end{equation}
This means that in order to establish the existence of an unbounded component in $\mathcal{V}$ we need to search for components that may exit planes.
As it turns out, it is enough to consider the vacant set $\mathcal{V}$ intersected with the slab $\mathbb{R}^2 \times [0,1000]$.
The number $1000$ carries no special significance and it was chosen large enough so that the proof of Proposition \ref{prop:pq_tozero} could be carried out.

\begin{remark}
\label{r:diff}
1) As it was established in Remark~3.2 1) and 3) in \cite{TW10b}, the model considered in this article does not dominate nor is dominated by any (constant radius) Boolean percolation model, indicating that the techniques currently available for the Boolean and the Bernoulli percolation may not work to establish results in the current context. 
This is well illustrated in \cite{TW10b}, Remark~3.2 2), where the authors rule out the so-called exponential bounds which are very useful for Boolean and Bernoulli percolation.

2) After establishing the existence of a non-trivial phase transition, one could be interested in studying the uniqueness of such transition.
Roughly speaking this corresponds to study whether the correlation length undergoes any abrupt change, besides the one expected at $u_*$.
Both Boolean percolation and Bernoulli percolation present a unique phase transition in this sense.
Moreover, for these models the two points function decays exponentially both in the sub-critical phase (Menshikov's theorem) and in the finite clusters of the super-critical phase (see Theorem~(8.18) in \cite{Gri99}, p.205).

It is important to notice that these classical results may fail in the presence of long-range dependence.
For the coordinate percolation one can show that the two points function decays slower than a polynomial in the super-critical phase, see \cite{Hil11}.
On the other hand in the sub-critical phase for low enough parameter $p$, this rate is exponential.
Is is still not known whether the decay is exponential throughout all the sub-critical phase.
For the P{.} Winkler percolation process, it has been shown in \cite{Gacs00} that the decay is also polynomial throughout all the super-critical phase.
For interlacements percolation, this decay is known to be no faster than a stretched exponential, see Theorem~3.6 of \cite{T10}.
It is an interesting problem to study the above questions for the cylinder's percolation model, see remark \eqref{r:open} 3).
\end{remark}

\section{The renormalization scheme}
\label{s:renorm}

In this section we start to develop the renormalization scheme that leads to the proof that $\mathcal{V}$ percolates within the slab $\mathbb{R}^2 \times [0,1000]$ provided that the parameter $u$ is small enough.
As we have mentioned above, we will define a surface contained in this slab.
For this end we start by defining a hexagonal tilling of the plane $\mathbb{R}^2 \subset \mathbb{R}^3$.

First consider the set
\begin{equation}
G = \big\{2000 (n + m \, e^{i\tfrac{\pi}3})  ; \; \text{for } n, m \in \mathbb{Z}\big\},
\end{equation}
which will correspond to the centers of the faces defining the tilling.
The boundary $\mathcal{H}$ of the hexagonal tilling is defined as the set of points $x \in \mathbb{R}^2$ such that the distance between $x$ and  $G$ is attained for more than one point in $G$.
We also denote by $\hex \subset \mathbb{R}^2$ the face of the tilling containing the origin, or more precisely, $\hex$ is the closure of the connected component of $\mathbb{R}^2 \setminus \mathcal{H}$ containing the origin.

Consider the map $\dist(\cdot,\mathcal{H}) : \mathbb{R}^2 \to \mathbb{R}_+$, which associates to $x \in \mathbb{R}^2 \subset \mathbb{R}^3$ the distance $\dist(x,\mathcal{H})$ between $x$ and $\mathcal{H}$. 
We will be interested in the graph of this map regarded as a subset of $\mathbb{R}^3$, i.e.
\begin{equation}
\label{e:H}
H = \big\{ (x,t) \in \mathbb{R}^2 \times \mathbb{R}; \; x \in \mathbb{R}^2 \text{ and } t = \dist(x,\mathcal{H})\big\}, \text{ see Figure~\ref{f:setH}.}
\end{equation}
Note that $H$ is a surface contained in the slab $\mathbb{R}^2 \times [0,1000]$ mentioned above and that $(0,0,1000)$ belongs to $H$.

   \begin{center}
   \begin{figure}
   \includegraphics[width = 90mm]{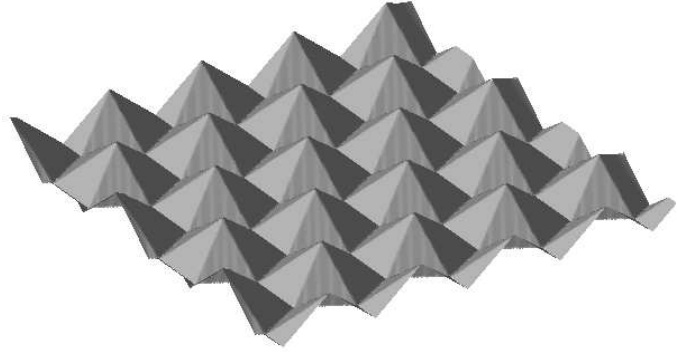}
   \caption{A piece of the set $H$.} \label{f:setH}
   \end{figure}
   \end{center}

We now state the main result in this article
\begin{theorem}
\label{t:main}
For $d = 3$, for $u$ small enough,
\begin{equation}
\mathbb{P}_u [\mathcal{V} \cap H \text{ has an unbounded connected component}] = 1.
\end{equation}
In particular, $u_* > 0$ in three dimensions.
\end{theorem}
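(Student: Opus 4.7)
The plan is to establish the theorem through a multi-scale renormalization argument tailored to handle the long-range dependencies inherent to cylinder percolation, using duality on the surface $H$. The starting point is that $H$ is homeomorphic to $\mathbb{R}^2$ (as the graph of a continuous function), so the absence of an unbounded component of $\mathcal{V}\cap H$ containing a given point forces the existence of a closed circuit in $\mathcal{L}\cap H$ around that point, which is the content of \eqref{e:Percsigma}. It therefore suffices to prove that for $u$ small the probability of finding arbitrarily long $\mathcal{L}\cap H$-paths around the origin decays to zero; equivalently, that $\mathcal{L}\cap H$ does not percolate.

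First I would fix the rapidly growing sequence of scales $(a_n)_{n\ge 0}$ from \eqref{eq:scal} and define $p_n$ to be the probability that there exists a path in $\mathcal{L}\cap H$ connecting $B(0,a_n/10)$ to the boundary of $B(0,a_n)$. A crossing at scale $a_n$ can be traced back to two crossings at scale $a_{n-1}$ based at well-separated locations, as recorded in \eqref{e:conn_inc}; a union bound over the positions of these two smaller balls then gives, informally, an inequality of the form
\begin{equation*}
p_n \le K_n \bigl(p_{n-1}^2 + (\text{dependence correction})\bigr),
\end{equation*}
where $K_n$ is a polynomial combinatorial factor in $a_n/a_{n-1}$.

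The main obstacle is controlling the dependence correction. Two crossings in far-apart balls are not independent under $\mathbb{P}_u$ because a single cylinder may reach both regions, and the probability of this scales only linearly in $a_{n-1}/a_n$, reflecting the $|x-y|^{-(d-1)}$ covariance bound in \eqref{e:depend}, which is in general not enough to defeat the combinatorial factor $K_n$. To tame this I would introduce the auxiliary quantity $q_n$, the probability of the same crossing event with one or two deterministic cylinders added to $\mathcal{L}$. Conditioning on which cylinder(s) in the cloud reach both child balls turns the two child crossings into (conditionally independent) events of $q_{n-1}$-type, yielding a coupled recursion of the schematic form
\begin{equation*}
p_n \le K_n\, p_{n-1}^2 + K'_n\, \pi_n\, q_{n-1}^2, \qquad q_n \le K_n\, q_{n-1}^2 + K'_n\, \pi_n\, q_{n-1}^2,
\end{equation*}
where $\pi_n$ is the small probability that a random cylinder meets both child balls. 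Choosing the growth of $a_n$ suitably makes this system contracting in an $L^\infty$ sense, as long as $(p_0,q_0)$ is small enough.

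Finally, to trigger the recursion I would exploit the roughness of $H$ recorded in \eqref{e:smallpieces}: the intersection of any single cylinder with $H$ breaks into small connected pieces of uniformly bounded diameter. Consequently, a crossing from $B(0,a_0/10)$ to the boundary of $B(0,a_0)$ inside $\mathcal{L}\cap H$ requires the cooperation of many cylinders, and a direct Poisson estimate shows that for $u$ small both $p_0$ and $q_0$ can be made arbitrarily small. Iterating the recursion yields $p_n\to 0$, and combined with a standard Borel--Cantelli plus the ergodicity noted in Section~\ref{s:notation} this upgrades to $\mathbb{P}_u$-almost sure percolation of $\mathcal{V}\cap H$, so in particular $u_*>0$.
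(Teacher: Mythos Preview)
Your outline captures the overall architecture correctly (duality on $H$, the coupled $(p_n,q_n)$ recursion, triggering via the roughness of $H$, and ergodicity/Borel--Cantelli to conclude), but there is a genuine gap in the recursion for $q_n$, and a smaller one in the triggering.

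\medskip

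\textbf{The $q_n$ recursion as you wrote it does not contract.} In your schematic
\[
q_n \le K_n\, q_{n-1}^2 + K'_n\,\pi_n\, q_{n-1}^2,
\]
you put the full combinatorial factor $K_n\asymp (a_n/a_{n-1})^2=a_{n-1}^{2(\gamma-1)}$ in front of $q_{n-1}^2$. With $\gamma=7/6$ and the target decay $q_{n-1}\le a_{n-1}^{(3/2)(1-\gamma)}$ one gets $K_n q_{n-1}^2\le c\,a_{n-1}^{1-\gamma}$, whereas the induction needs $q_n\le a_n^{(3/2)(1-\gamma)}=a_{n-1}^{(7/4)(1-\gamma)}$; since $1-\gamma>(7/4)(1-\gamma)$ this fails. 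The paper closes this gap with a geometric lemma you did not mention (Lemma~\ref{l:cove_3}, the ``four spheres'' trick): a fixed cylinder can be nearly tangent to at most one of four nested spheres $\partial S(x_0,(i+1)a_n/6)$, so one can always choose indices $i_1\neq i_2$ for which each of the two deterministic cylinders meets at most a \emph{bounded} number $\useconstant{secant}$ of balls in the corresponding coverings $\mathcal H_n^{i_1},\mathcal H_n^{i_2}$. This replaces the union bound over all $(a_n/a_{n-1})^2$ pairs by a three-way split (Lemma~\ref{l:Adelta}): pairs where neither child ball is touched by the deterministic cylinders contribute $K_n$-many terms of type $p_{n-1}^2$; pairs where exactly one is touched contribute $O(a_n/a_{n-1})$ terms of type $p_{n-1}q_{n-1}$; and pairs where both are touched contribute only $O(1)$ terms of type $q_{n-1}^2$. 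It is precisely this constant coefficient in front of $q_{n-1}^2$ that makes the system contract; without it the scheme collapses.

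\medskip

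\textbf{Triggering $q_0$ needs two cylinders, not one.} Your justification for $q_0\to 0$ invokes that a \emph{single} cylinder meets $H$ in pieces of bounded diameter. But $q_0$ involves two deterministic cylinders, and two intersecting cylinders can produce a long connected set in $\mathbb{R}^3$. The paper handles this with Lemma~\ref{l:two_to_one_cyl}, which shows that any long path inside the union of two unit cylinders contains a long subpath inside a single cylinder of radius $4$; only then does the finite-horizon property of the hexagonal pattern $\mathcal H$ give \eqref{e:smallpieces}. You should flag this reduction explicitly.
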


We denote by $\pi$ the orthogonal projection from $\mathbb{R}^3$ onto $\mathbb{R}^2$.
When restricted to $H$, $\pi$ defines a homeomorphism between $H$ and $\mathbb{R}^2$.

\begin{remark}
\label{r:compare}
Let us briefly compare \eqref{e:noplane} to Theorem~\ref{t:main}. Note that both $\mathcal{V} \cap \mathbb{R}^2$ and $\pi(\mathcal{V} \cap H)$ are random subsets of $\mathbb{R}^2$ which are ergodic (see Lemma~3.3 in \cite{TW10b} and Section~\ref{s:main}). Moreover, they present similar decays of correlation, indeed they both satisfy \eqref{e:depend}.
However, \eqref{e:noplane} shows that $\mathcal{V} \cap \mathbb{R}^2$ does not present a phase transition, while Theorem~\ref{t:main} proves that $\pi(\mathcal{V} \cap H)$ does.
In this paper, we explain the discrepancy between these two processes in Proposition~\ref{prop:pq_tozero}, which holds true only for $\pi(\mathcal{V} \cap H)$ (see Remark~\ref{r:diffHR2}).
This raises the following question: for which models of Poissonian obstacles can one establish the existence (or absence) of a non-degenerate phase transition as the intensity $u$ varies?
\end{remark}

The proof of this theorem uses in an indirect way the duality of $\mathbb{R}^2$.
More precisely,
\begin{equation}
\label{e:dual}
\begin{array}{c}
\text{if the connected component of $\mathcal{V}(\omega) \cap H$ containing $(0,0,1000)$ is bounded,}\\
\text{then there exists a circuit in $\pi(\mathcal{L}(\omega) \cap H)$ surrounding the origin in $\mathbb{R}^2$,}
\end{array}
\end{equation}
see the paragraph before equation \eqref{e:Percsigma} for a proof.

In view of \eqref{e:dual}, we should pursue a bound on the existence of large paths in $\pi(\mathcal{L}(\omega) \cap H)$. For this, we follow a renormalization argument inspired in \cite{TW10b} and \cite{Szn09}. But first we introduce some notation. Let
\begin{equation}
\label{e:gamma}
a_0 \in [288^6, \infty) \quad \text{and} \quad \gamma = 7/6.
\end{equation}
The choice of the parameter $a_0$ will be made latter, but it is important to notice that all the statements we make in this section (and in the Appendix) hold true for any $a_0$ as above.
Moreover, in accordance to our convention, all the constants appearing in this section are independent of the specific choice of $a_0$ unless stated otherwise.

Let us also consider the following sequence of scales:
\begin{equation}
\label{eq:scal}
a_n = a_0^{\gamma^n}.
\end{equation}
Note that this sequence grows faster than exponentially. In fact
\begin{equation}
\label{e:an_ratio}
\Big(\frac{a_n}{a_{n-1}} \Big) = a_{n-1}^{\gamma-1}.
\end{equation}
The reason to impose that $a_0 \geq 288^6$, is to guarantee that
\begin{equation}
\label{e:a_lower0}
a_n \geq 8000 \quad \text{and} \quad a_{n+1} \geq 288 \, a_{n}, \text{ for every $n \geq 0$},
\end{equation}
which will be useful for instance in the proofs of Lemmas~\ref{l:cove_2}, \ref{l:p_rec} and \ref{l:cove_3} below.

For $x \in \mathbb{R}^2$ and $r>0$ set $S(x,r) = \{y \in \mathbb{R}^2; \text{ dist}(x,y) \leq r\}$ and $\partial S(x,r)$ the boundary of $S(x,r)$ in $\mathbb{R}^2$.
For $n \geq 0$ and any $x \in \mathbb{R}$, we define the following function $A_n: \mathbb{R}^2 \times \Omega \to \{0,1\}$
\begin{equation}
\label{e:Anx}
A_n (x, \omega) = \mathbf{1}{\{S(x, a_n/10) \leftrightarrow \partial{S(x,a_n)} \text{ in } \pi(\mathcal{L}(\omega) \cap H)\}},
\end{equation}
where the event appearing in the right-hand side of the previous equation is: `there exists a continuous path starting at a point in $S(x, a_n/10)$ and ending at a point in $\partial{S(x, a_n)}$ and having its image contained in $\pi(\mathcal{L}(\omega) \cap H)$'.
Note that for a fixed $x \in \mathbb{R}^2$ and a point measure $\omega' \in \Omega$ with finite support,
\begin{equation}
\text{the function } \omega \mapsto A_n(x,\omega + \omega') \text{ is measurable.}
\end{equation}
To see this, first observe that the set $\mathcal{L}(\omega + \omega') \cap H \cap (S(x,a_n) \times \mathbb{R})$ is given by the union of finitely many convex and compact sets, as it can be seen by splitting the set $H$ into its faces.
The rest of the proof follows the same arguments as for Lemma~5.2 in \cite{TW10b}.

Denoting by $A_n(x)$ the random variable $A_n(x, \omega): \Omega \to \{0,1\}$, we can define
\begin{equation}
\label{e:pn}
p_n (u) = \sup_{x \in \mathbb{R}^2} \mathbb{E}_u [A_n(x)] = \sup_{x \in \hex} \mathbb{E}_u [A_n(x)],
\end{equation}
where for the second equality we used the periodicity of the set $H$ and the translation invariance of $\mathbb{P}_u$.
In order to prove Theorem~\ref{t:main}, we first need to show that for $u$ small enough the sequence $p_n(u)$ decays fast with $n$.
This will be obtained via a recursion relation that we develop below.

We define for each $n \geq 1$, the lattice
\begin{equation}
\label{e:Jn}
\mathcal{J}_n = \Big( \frac{a_{n-1}}{10} \Big) \cdot \mathbb{Z}^2.
\end{equation}
and for $i = 1,2,3,4$
\begin{equation}
\label{e:Hin}
\mathcal{H}_n^i = \Big\{ x \in \mathcal{J}_n ; S(x, a_{n-1}) \cap {\textstyle \bigcup \limits_{x_0 \in \hex}} \partial{S} \big( x_0, \tfrac{(i+1)a_n}6 \big) \not = \varnothing \Big\}.
%\bigcup_{x_0 \in \hex} \partial{S\left( x_0, (i+1)a_n/6 \right)} \subset \bigcup_{x \in \mathcal{H}_n^i} \partial{S \left( x, a_{n-1}/10 \right)}
\end{equation}
The reason to consider four different spheres ($i = 1, \dots, 4$) is explained in the paragraph before Lemma~\ref{l:cove_3}.

   \begin{center}
    \begin{figure}
    \includegraphics[width = 110mm]{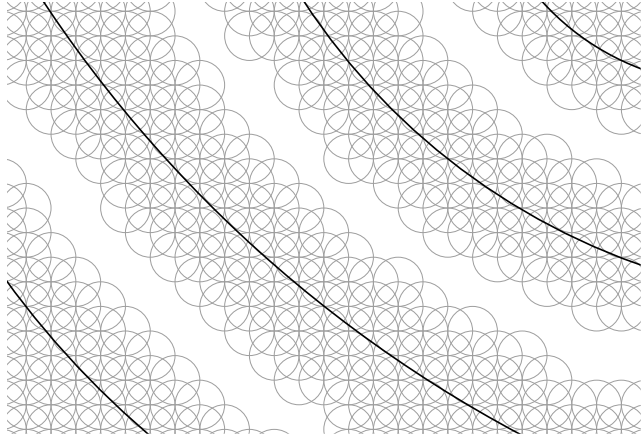}
    \caption{For $i = 1, \dots, 4$, the figure ilustrates a section of the sets $\bigcup_{x_0 \in \hex} \partial{S} \big( x_0, \tfrac{(i+1)a_n}6 \big)$ (in black) and the balls $S(x,a_{n-1}/10)$, for $x \in \mathcal{H}^i_n$ (in light gray).} \label{f:Hin}
    \end{figure}
    \end{center}

%%%Coverings%%%

The sets $\mathcal{H}^i_n$ defined in (\ref{e:Hin}) satisfy three important properties that will be useful for proving Theorem~\ref{t:main}. They are stated in Lemmas~\ref{l:cove_1}, \ref{l:cove_2} and \ref{l:cove_3} and, even though their proofs are quite simple, we include them in the Appendix for the convenience of the reader.

The first of these properties states that the sets $\mathcal{H}^i_n$ can be used to define coverings of the spheres $\partial{S} ( x_0, {(i+1)a_n}/6 )$, see Figure~\ref{f:Hin}. More precisely,

\begin{lemma}
\label{l:cove_1}
For $(a_n)_{n \geq 0}$ as in \eqref{e:new_scales}, if we let $\mathcal{H}_n^i$ be defined as in (\ref{e:Hin}), then
\begin{equation}
\label{eq:cove_1}
\bigcup_{x_0 \in \hex} \partial S \left( x_0, \tfrac{(i+1)a_n}{6} \right) \subset \bigcup_{x \in \mathcal{H}_n^i} S \left( x, \tfrac{a_{n-1}}{10} \right),
\end{equation}
for all $i = 1, \ldots, 4$, and $n \geq 1$.
\end{lemma}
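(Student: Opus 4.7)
The plan is simply to pick, for each point of the target set, a nearby lattice point and verify both containment and membership in $\mathcal{H}_n^i$. Concretely, fix $i \in \{1,2,3,4\}$, $n \geq 1$, and let $y$ be an arbitrary point of $\bigcup_{x_0 \in \hex} \partial S(x_0, (i+1)a_n/6)$. I want to exhibit some $x \in \mathcal{H}_n^i$ with $y \in S(x, a_{n-1}/10)$.

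First I would use the density of the lattice $\mathcal{J}_n = (a_{n-1}/10)\cdot \mathbb{Z}^2$: since consecutive lattice sites are at Euclidean distance $a_{n-1}/10$ along the axes, every point of $\mathbb{R}^2$ lies within sup-norm distance $a_{n-1}/20$ of some $x \in \mathcal{J}_n$, hence within Euclidean distance $(\sqrt{2}/20)\,a_{n-1} < a_{n-1}/10$. Apply this to $y$ to obtain an $x \in \mathcal{J}_n$ with $|y-x| \leq a_{n-1}/10$. This already gives $y \in S(x,a_{n-1}/10)$, which is the containment claimed by the right-hand side of \eqref{eq:cove_1}.

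It remains to check that the chosen $x$ actually lies in $\mathcal{H}_n^i$. But since $|y-x| \leq a_{n-1}/10 \leq a_{n-1}$, the point $y$ belongs to $S(x,a_{n-1})$ as well as to $\bigcup_{x_0 \in \hex} \partial S(x_0, (i+1)a_n/6)$, so the intersection in the definition \eqref{e:Hin} is nonempty and $x \in \mathcal{H}_n^i$. Since $y$ was arbitrary, this proves the inclusion.

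I do not anticipate any real obstacle: the statement is essentially a covering-by-balls fact for the integer-spaced lattice $\mathcal{J}_n$, and the construction of $\mathcal{H}_n^i$ was made precisely so that the balls selected by the membership condition automatically cover the target union of circles. The only thing worth being careful about is the numerical inequality $\sqrt{2}/20 < 1/10$ used to bring the nearest-lattice-point distance strictly below $a_{n-1}/10$; this is immediate.
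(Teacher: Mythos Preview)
Your proof is correct and follows essentially the same approach as the paper's: pick a nearest lattice point $x \in \mathcal{J}_n$ within distance $a_{n-1}/10$ of the given $y$, then use that same $y$ as the witness showing $x \in \mathcal{H}_n^i$. Your explicit justification of the bound $\sqrt{2}/20 < 1/10$ is a welcome detail that the paper leaves implicit.
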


\begin{proof}The proof of this lemma is postponed to the Appendix. \end{proof}

In Lemma~\ref{l:pbound} below, we are going to use union bounds on $x \in \mathcal{H}^i_n$, therefore we need the following control on the cardinality of these sets.

\begin{lemma}
\label{l:cove_2}
There exists a positive constant \newconstant{entrop} $\useconstant{entrop}$ such that, for any $(a_n)_{n \geq 0}$ as in \eqref{e:new_scales} if we let $\mathcal{H}_n^i$ be defined as in (\ref{e:Hin}), then
\begin{equation}
\label{eq:cove_2}
\max_{i = 1,\ldots, 4} \left| \mathcal{H}_n^i \right| \leq \useconstant{entrop} \big( \tfrac{a_n}{a_{n-1}}\big),
\end{equation}
for all $n \geq 1$. Note that, in accordance with our convention on constants, $\useconstant{entrop}$ does not depend on the specific choice of the scale parameter $a_0$.
\end{lemma}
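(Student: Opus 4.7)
The plan is to confine every point of $\mathcal{H}_n^i$ to a thin annular region around the origin of radial width comparable to $a_{n-1}$, and then to count the lattice points of $\mathcal{J}_n$ (whose spacing is $a_{n-1}/10$) inside this annulus by a standard volume estimate.

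First, since $\hex$ is the Voronoi cell at the origin of the triangular lattice $G$ whose generators have modulus $2000$, one has $\hex \subset S(0,2000)$. Writing $R=(i+1)a_n/6$, the triangle inequality gives $\bigl||y|-R\bigr|\leq 2000$ for every $y\in\partial S(x_0,R)$ with $x_0\in\hex$, so
$$\bigcup_{x_0\in\hex}\partial S(x_0,R)\ \subset\ \bigl\{y\in\mathbb{R}^2:\bigl||y|-R\bigr|\leq 2000\bigr\},$$
an annulus of radial thickness $4000$.

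Next, I would enlarge this annulus by $a_{n-1}$: if $x\in\mathcal{H}_n^i$ then $S(x,a_{n-1})$ meets the above annulus, which forces $\bigl||x|-R\bigr|\leq 2000+a_{n-1}$. By \eqref{e:a_lower0}, $a_{n-1}\geq 8000$, so $2000\leq a_{n-1}/4$; the enlarged annulus thus has radial thickness at most $5a_{n-1}/2$ and area at most $5\pi R\, a_{n-1}$. Since each lattice point of $\mathcal{J}_n$ occupies a cell of area $(a_{n-1}/10)^2$, a standard volume bound (with a boundary correction of order $R/a_{n-1}$, which is subdominant) gives
$$|\mathcal{H}_n^i|\ \leq\ \frac{c\,R\,a_{n-1}}{(a_{n-1}/10)^2}\ \leq\ c'\,\frac{a_n}{a_{n-1}},$$
where in the last step I used $R\leq 5a_n/6$. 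The numerical inputs ($2000$ from the hexagonal tiling and $8000$ from \eqref{e:a_lower0}) are fixed, so the resulting constant is absolute, independent of $a_0$. The only care needed is to check that the boundary correction in counting lattice points in an annulus is indeed subdominant; this is straightforward because the annulus perimeter is of order $R$ while the lattice spacing is $a_{n-1}/10$, so the number of boundary cells is at most of order $R/a_{n-1}\leq a_n/a_{n-1}$ and can be absorbed into $c'$.
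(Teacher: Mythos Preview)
Your argument is correct and follows essentially the same elementary counting idea as the paper: both confine $\mathcal{H}_n^i$ to a region of length $O(a_n)$ and width $O(a_{n-1})$ and then count $\mathcal{J}_n$-points there. The paper packages this as a covering of $\cup_{x_0\in\hex}\partial S(x_0,R)$ by $\lceil 2\pi a_n/a_{n-1}\rceil$ arcs of diameter $\le a_{n-1}$, thickened by $4000$, each meeting $O(1)$ balls $S(x,a_{n-1})$ with $x\in\mathcal{J}_n$; you instead pass to a single annulus about the origin and count by area, which is an equally clean variant. One small wording correction: the boundary term you compute is of the \emph{same} order $a_n/a_{n-1}$ as the volume term, not subdominant, but as you note it is absorbed into the constant, so the conclusion stands.
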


\begin{proof}The proof of this lemma is also presented in the Appendix. \end{proof}

As mentioned above, in order to prove that for some $u$ small enough the probability $p_n(u)$ decays with $n$, we are going to obtain a recursion relation between $p_n(u)$ and $p_{n-1}(u)$.
The next lemma gives an indication why this should be possible, as it relates $p_n$ with the random variable $A_{n-1}(\cdot)$.

\begin{lemma}
\label{l:pbound} Fix $u > 0$ and recall the definitions of $p_n(u)$ in \eqref{e:pn} and $\mathcal{H}^i_n$ in \eqref{e:Hin}. For any pair of distinct $i_1, i_2 \in \{1,2,3,4\}$, it holds that
\begin{equation}
\label{e:pbound}
p_n(u) \leq {\useconstant{entrop}}^2 \Big(\frac{a_n}{a_{n-1}} \Big)^2 \sup_{{x_1 \in \mathcal{H}^{i_1}_n}, \;  {x_2 \in \mathcal{H}^{i_2}_n}} \mathbb{E}_u \left[ A_{n-1} (x_1) A_{n-1} (x_2) \right],
\end{equation}
for all $n \geq 1$, where the constant $\useconstant{entrop} > 0$ is the one appearing in Lemma~\ref{l:cove_2}.
\end{lemma}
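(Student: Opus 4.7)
The plan is to show that the event $\{A_n(x) = 1\}$ forces the simultaneous occurrence of $\{A_{n-1}(x_1) = 1\}$ and $\{A_{n-1}(x_2) = 1\}$ for some $x_j \in \mathcal{H}^{i_j}_n$, $j = 1,2$, after which a union bound combined with Lemma~\ref{l:cove_2} gives the stated inequality.

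By the second equality in \eqref{e:pn} it suffices to fix $x \in \hex$ and bound $\mathbb{E}_u[A_n(x)]$. On the event $\{A_n(x) = 1\}$ there is a continuous curve $\gamma$ in $\pi(\mathcal{L}(\omega) \cap H)$ running from $S(x, a_n/10)$ to $\partial S(x, a_n)$. Because $(i+1)/6 \in (1/10, 1)$ for every $i \in \{1,2,3,4\}$, the curve $\gamma$ must cross each of the two intermediate spheres $\partial S(x, (i_j+1)a_n/6)$, $j = 1,2$, at some point $y_j$. Since $x \in \hex$, Lemma~\ref{l:cove_1} then provides $x_j \in \mathcal{H}^{i_j}_n$ with $y_j \in S(x_j, a_{n-1}/10)$.

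The key geometric step is to verify $A_{n-1}(x_j) = 1$. From $|x - x_j| \leq (i_j+1)a_n/6 + a_{n-1}/10 \leq 5a_n/6 + a_{n-1}/10$ and the fact that $\gamma$ terminates on $\partial S(x, a_n)$, the sub-arc of $\gamma$ emanating from $y_j$ towards $\gamma(1)$ eventually attains distance at least $a_n - 5a_n/6 - a_{n-1}/10 = a_n/6 - a_{n-1}/10$ from $x_j$. By \eqref{e:a_lower0} this quantity exceeds $a_{n-1}$, so by continuity the sub-arc must cross $\partial S(x_j, a_{n-1})$; its initial segment then supplies a continuous path in $\pi(\mathcal{L}(\omega) \cap H)$ from $S(x_j, a_{n-1}/10)$ (it contains $y_j$) to $\partial S(x_j, a_{n-1})$, which is exactly the event $\{A_{n-1}(x_j) = 1\}$.

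I would finish with a union bound over the (finitely many) possible choices of $(x_1, x_2)$,
\begin{equation*}
\mathbb{E}_u[A_n(x)] \leq |\mathcal{H}^{i_1}_n|\,|\mathcal{H}^{i_2}_n| \sup_{x_1 \in \mathcal{H}^{i_1}_n,\, x_2 \in \mathcal{H}^{i_2}_n} \mathbb{E}_u\!\left[A_{n-1}(x_1) A_{n-1}(x_2)\right],
\end{equation*}
apply Lemma~\ref{l:cove_2} to each cardinality, and then take the supremum over $x \in \hex$ to get \eqref{e:pbound}. The only mildly delicate point is to keep track of the constants so that the sub-arcs genuinely reach $\partial S(x_j, a_{n-1})$; this is handled cleanly by the super-exponential growth condition \eqref{e:a_lower0}, which gives a comfortable factor of $288$ between successive scales.
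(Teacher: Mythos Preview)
Your proof is correct and follows essentially the same approach as the paper's: fix $x_0 \in \hex$, use Lemma~\ref{l:cove_1} to locate points $x_j \in \mathcal{H}^{i_j}_n$ where the crossing path meets the intermediate spheres, argue that the path also crosses $\partial S(x_j, a_{n-1})$, then take a union bound with the cardinality estimate of Lemma~\ref{l:cove_2}. You supply a bit more detail than the paper on the geometric verification that the sub-arc reaches $\partial S(x_j, a_{n-1})$, invoking \eqref{e:a_lower0} explicitly, whereas the paper simply asserts this step.
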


\begin{proof}
Fix an $x_0 \in \hex$ as in the right-hand side of \eqref{e:pn}.
By the property of the sets $\mathcal{H}^i_n$ stated in Lemma~\ref{l:cove_1}, we have that for any $j = 1,2$ the family $\{S(x,a_{n-1}/10)\}_{x \in \mathcal{H}^{i_j}_n}$ covers the sphere $\partial{S}(x_0, (i_j+1)a_n/6)$. Therefore, any path connecting $S(x_0, a_n/10)$ to $\partial{S}(x_0, a_n)$ must intersect a ball $S\big(x_j,(a_{n-1})/10\big)$ with $x_j \in \mathcal{H}^{i_j}_n$ for both $j = 1,2$.
It also follows that this path must connect $S\big(x_j,(a_{n-1})/10\big)$ to $\partial{S}(x_j, a_{n-1})$, for $j=1,2$.
In particular we have the following inclusion
\begin{equation}
\label{e:conn_inc}
\begin{split}
\big\{& S(x, \tfrac{a_n}{10})  \leftrightarrow \partial{S(x,a_n)} \text{ in } \pi(\mathcal{L}(\omega) \cap H)\big\}\\
& \subseteq \bigcup_{x_1 \in \mathcal{H}^{i_1}_n, \; x_2 \in \mathcal{H}^{i_2}_n} \;\;\bigcap_{j = 1,2} \Big\{ S(x_j, \tfrac{a_{n-1}}{10}) \leftrightarrow \partial{S} (x_j, a_{n-1}) \text{ in } \pi(\mathcal{L}(\omega) \cap H) \Big\}
\end{split}
\end{equation}

Using Lemma~\ref{l:cove_2}, we have that the above union has no more than ${\useconstant{entrop}}^2 (a_n/a_{n-1})^2$ members, so that
\begin{equation}
\mathbb{E}_u[A_n(x_0)] \leq \useconstant{entrop}^2 \big(\tfrac{a_n}{a_{n-1}}\big)^2 \sup_{{x_1 \in \mathcal{H}^{i_1}_n}, \;  {x_2 \in \mathcal{H}^{i_2}_n}} \mathbb{E}_u \left[ A_{n-1} (x_1) A_{n-1} (x_2) \right].
\end{equation}
The result now follows by taking the supremum over $x_0 \in \hex$.
\end{proof}

We now have to deal with the dependence between the two indicator functions appearing in the right-hand side of \eqref{e:pbound}.
Let us mention here that the technique employed to bound this dependence in Theorem~5.1 of \cite{TW10b} is destined to fail, see Remark~\ref{r:track} below and Proposition~5.6 of \cite{TW10b}.

Therefore, in order to keep track of more refined details on the dependence between $A_{n-1} (x_1)$ and $A_{n-1} (x_2)$, we introduce the following sequence
\begin{equation}
\label{e:qn}
q_n (u) = \sup_{x \in \hex} \;\; \sup_{l_1, l_2 \in \mathbb{L}} \mathbb{E}_u [A_n(x, \omega + \delta_{l_1} + \delta_{l_2})],
\end{equation}
where the above expectation is taken with respect to $\omega \in \Omega$.
Note that the random variable $A_n(x, \omega + \delta_{l_1} + \delta_{l_2})$ corresponds to $A_n(x,\omega)$ after we add two deterministically positioned lines to the random point measure $\omega$.
The quantity $q_{n-1}(u)$ will help us to control the dependence between the random variables $A_{n-1} (x_1)$ and $A_{n-1} (x_2)$ for $x_1 \in \mathcal{H}^{i_1}_n$ and $x_2 \in \mathcal{H}^{i_2}_n$.
This control is attained by considering the number of cylinders intersecting at the same time some suitable neighborhoods of $x_1$ and $x_2$ in three different scenarios:
The first in which this number is equal to zero, the second in which this number is either one or two and the third in which this number is at least three.
In the first scenario we can consider $A_{n-1}(x_1)$ and $A_{n-1}(x_2)$ as being independent.
The probability that the third scenario occurs is sufficiently small for our purposes.
In the second scenario we dominate the dependencies by adding two cylinders to the process.
That is the point where $q_n(u)$ will be useful and this explains why two lines appear in its definition.
The details are carried out in the lemma below.

Given two sets $A, B \subset \mathbb{R}^3$ that are either open or compact, we define the following subsets of $\mathbb{L}$.
\begin{equation}
\label{e:LA}
L_A = \{l \in \mathbb{L}; C(l) \text{ intersect } A\}
\end{equation}
\begin{equation}
\label{e:LAB}
L_{A,B} = \{l \in \mathbb{L}; C(l) \text{ intersects both $A$ and $B$}\}.
\end{equation}
We refer to the paragraph below Equation (2.11) in \cite{TW10b} for an explanation concerning the measurability of these sets.

\begin{lemma}
\label{l:EA1A2}
Fix $n \geq 1$, two distinct $i_1, i_2 \in \{1,2,3,4\}$ and points $x_1 \in \mathcal{H}^{i_1}_n$ and $x_2 \in \mathcal{H}^{i_2}_n$. Defining $D_j = S(x_j, a_{n-1}) \times [0,1000]$ for $j = 1,2$, we have
\begin{equation*}
\mathbb{E}_u \left[ A_{n-1} (x_1) A_{n-1} (x_2) \right] \leq p_{n-1}^2(u) + \mathbb{P}_u \big[ \omega(L_{D_1, D_2}) \geq 3 \big] + \mathbb{P}_u \big[ 1 \leq \omega(L_{D_1, D_2}) \leq 2 \big] q_{n-1}^2(u).
\end{equation*}
\end{lemma}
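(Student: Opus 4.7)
The plan is to decompose the expectation $\mathbb{E}_u[A_{n-1}(x_1)A_{n-1}(x_2)]$ according to the value $k := \omega(L_{D_1, D_2})$ of the number of cylinders intersecting both $D_1$ and $D_2$, matching the three regimes $k=0$, $1\leq k\leq 2$ and $k\geq 3$ that appear on the right-hand side. The starting observation is that, because $H\subset \mathbb{R}^2\times[0,1000]$ and $\pi$ is the vertical projection, the indicator $A_{n-1}(x_j,\omega)$ is determined by $\mathcal{L}(\omega)\cap D_j$, and hence by the restriction $\omega|_{L_{D_j}}$. By the restriction property of Poisson point processes, the three point measures
\begin{equation*}
\omega_1=\omega|_{L_{D_1}\setminus L_{D_2}},\quad \omega_2=\omega|_{L_{D_2}\setminus L_{D_1}},\quad \omega_{12}=\omega|_{L_{D_1,D_2}}
\end{equation*}
are mutually independent, and $\omega|_{L_{D_j}}=\omega_j+\omega_{12}$ for $j=1,2$.

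Conditioning on $\omega_{12}$, the factors $A_{n-1}(x_1,\omega_1+\omega_{12})$ and $A_{n-1}(x_2,\omega_2+\omega_{12})$ become functions of the independent measures $\omega_1$ and $\omega_2$, so
\begin{equation*}
\mathbb{E}_u\big[A_{n-1}(x_1)A_{n-1}(x_2)\,\big|\,\omega_{12}\big]=\prod_{j=1}^{2}\mathbb{E}_u\big[A_{n-1}(x_j,\omega_j+\omega_{12})\,\big|\,\omega_{12}\big].
\end{equation*}
On the event $\{k=0\}$ this equals $\mathbb{E}_u[A_{n-1}(x_1,\omega_1)]\,\mathbb{E}_u[A_{n-1}(x_2,\omega_2)]$; coupling $\omega_j$ as the restriction of a full-intensity Poisson process to $L_{D_j}\setminus L_{D_{3-j}}$ and invoking the monotonicity of $A_{n-1}$ under addition of cylinders, each factor is at most $p_{n-1}(u)$, producing the term $p_{n-1}^2(u)$. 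On $\{k\geq 3\}$ we bound the product of indicators by $1$, giving $\mathbb{P}_u[\omega(L_{D_1,D_2})\geq 3]$. On $\{1\leq k\leq 2\}$, write $\omega_{12}=\delta_{\ell_1}+\cdots+\delta_{\ell_k}$ and, if $k=1$, pad with an arbitrary second line; by monotonicity again, each conditional factor is bounded by $\mathbb{E}_u[A_{n-1}(x_j,\omega'+\delta_{\ell_1}+\delta_{\ell_2})]$, where $\omega'$ is an independent full-intensity Poisson process. Using the translation invariance of $\mathbb{P}_u$ together with the periodicity of $H$ under the hexagonal lattice $G$, we shift $x_j$ into $\hex$ (translating the two added lines accordingly), after which the definition \eqref{e:qn} of $q_{n-1}(u)$ bounds each factor by $q_{n-1}(u)$; this gives a contribution of at most $q_{n-1}^2(u)\,\mathbb{P}_u[1\leq k\leq 2]$.

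Summing the three contributions yields the stated inequality. The most delicate step will be the regime $1\leq k\leq 2$: one has to simultaneously (i) couple $\omega_j$ with a full-intensity Poisson process independent of $\omega_{12}$ so that the expectation over the reduced process can be enlarged to an expectation over the full process, (ii) pad to the exactly two Dirac masses demanded by the definition of $q_{n-1}$, and (iii) exploit the hexagonal periodicity of $H$ to move $x_1,x_2\in\mathbb{R}^2$ into the supremum over $\hex$ appearing in \eqref{e:qn}.
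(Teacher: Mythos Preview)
Your proposal is correct and follows essentially the same approach as the paper: partition according to $\omega(L_{D_1,D_2})\in\{0\}\cup\{1,2\}\cup\{3,4,\dots\}$, use that the restrictions of $\omega$ to $L_{D_1}\setminus L_{D_2}$, $L_{D_2}\setminus L_{D_1}$ and $L_{D_1,D_2}$ are independent, condition on the third piece, and then bound each conditional factor via monotonicity of $A_{n-1}$ and the definitions of $p_{n-1}$ and $q_{n-1}$. Your treatment is in fact slightly more explicit than the paper's in one place: you spell out the periodicity/translation argument needed to reduce the supremum over $x_j\in\mathcal{H}^{i_j}_n$ to the supremum over $x\in\hex$ in the definition of $q_{n-1}$, which the paper leaves implicit.
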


Note that the first term in the right-hand side of the above equation corresponds to the natural bound that would be obtained if $A_{n-1} (x_1)$ and $A_{n-1} (x_2)$ were independent. 
Roughly speaking, the other two terms respectively account for the possibilities of `high' and `medium' interaction between $A_{n-1} (x_1)$ and $A_{n-1} (x_2)$.

\begin{proof}
We consider the partition of $\Omega$ into the three disjoint sets given by: $[\omega(L_{D_1, D_2}) = 0]$, $[\omega(L_{D_1, D_2}) \geq 3]$ and $[1 \leq \omega(L_{D_1, D_2}) \leq 2]$.
These three events will respectively correspond to the three terms in the right hand side of the above equation.

Let us first show that $\mathbb{E}_u [ A_{n-1} (x_1) A_{n-1} (x_2);~ \omega(L_{D_1, D_2}) = 0] \leq p_{n-1}^2(u)$. Indeed
\begin{equation}
\begin{split}
\mathbb{E}_u & [A_{n-1} (x_1) A_{n-1}(x_2);~ \omega(L_{D_1,D_2}) = 0 ] \\
& = \mathbb{E}_u \Big[A_{n-1} \big( x_1, \mathbf{1}_{\mathbb{L}\slash L_{D_2}} \cdot \omega \big) A_{n-1} \big( x_2, \mathbf{1}_{L_{D_2}} \cdot \omega \big) ; \omega(L_{D_1,D_2}) = 0 \Big] \\
& \leq \mathbb{E}_u \Big[A_{n-1} \big(x_1, \mathbf{1}_{\mathbb{L}\slash L_{D_2}} \cdot \omega \big) \Big] \mathbb{E}_u
\Big[ A_{n-1} \big( x_2, \mathbf{1}_{L_{D_2}} \cdot \omega \big) \Big] \leq p_{n-1}^2 (u).
\end{split}
\end{equation}
In the above estimate, we first used that when $\omega(L_{S_1,S_2}) = 0$, we have that $A_{n-1}(x_2, \omega) = A_{n-1}(x_2, \mathbf{1}_{L_{D_2}} \cdot \omega )$ and $A_{n-1}(x_1, \omega) = A_{n-1}(x_2, \mathbf{1}_{\mathbb{L} \setminus L_{D_2}} \cdot \omega )$. Then we neglected the intersection with $\omega(L_{D_1,D_2}) = 0$ and used the independence between the random variables $A_{n-1}(x_2, \mathbf{1}_{L_{D_2}} \cdot \omega )$ and $A_{n-1}(x_2, \mathbf{1}_{\mathbb{L} \setminus L_{D_2}} \cdot \omega )$ (note that they depend on the realization of a Poisson point process in disjoint sets).

It is clear that $\mathbb{E}_u [ A_{n-1} (x_1) A_{n-1} (x_2), \omega(L_{D_1, D_2}) \geq 3] \leq \mathbb{P}_u \big[ \omega(L_{D_1, D_2}) \geq 3 \big]$. Therefore, all we need to do in order to finish the proof is to show that
\begin{equation}
\label{e:w12}
\mathbb{E}_u [ A_{n-1} (x_1) A_{n-1} (x_2);~ 1 \leq \omega(L_{D_1, D_2}) \leq 2] \leq \mathbb{P}_u \big[ 1 \leq \omega(L_{D_1, D_2}) \leq 2 \big] q_{n-1}^2(u).
\end{equation}

For this, let us define for each given $\omega' \in \Omega$,  the following function
\begin{equation}
\phi(\omega') = \mathbb{E}_u \Big[A_{n-1} \big(x_1, \mathbf{1}_{L_{D_1} \setminus L_{D_2}} \cdot \omega + \omega' \big) \Big] \mathbb{E}_u \Big[A_{n-1} \big(x_2, \mathbf{1}_{L_{D_2} \setminus L_{D_1}} \cdot \omega + \omega' \big) \Big],
\end{equation}
where again the above expectations are taken with respect to $\omega \in \Omega$. Intuitively speaking, the above function is the product of the expectations of $A_{n-1}$ in $D_1$ and $D_2$ after a fixed penalization  $\omega'$ is introduced into the point measure.

We note that $\mathbf{1}_{L_{D_1} \setminus L_{D_2}}\cdot \omega$, $\mathbf{1}_{L_{D_2} \setminus L_{D_1}}\cdot \omega$ and $\mathbf{1}_{L_{D_1, D_2}}\cdot \omega$ are independent and
\begin{equation}
\begin{split}
& A_{n-1}(x_1, \omega) = A_{n-1} \big(x_1, \mathbf{1}_{L_{D_1} \setminus L_{D_2}}\cdot \omega + \mathbf{1}_{L_{D_1, D_2}}\cdot \omega \big),\\
& A_{n-1}(x_2, \omega) = A_{n-1} \big(x_2, \mathbf{1}_{L_{D_2} \setminus L_{D_1}}\cdot \omega + \mathbf{1}_{L_{D_1, D_2}}\cdot \omega \big).
\end{split}
\end{equation}
Which implies that
\begin{equation}
\label{e:A1A2}
\mathbb{E}_u \big[ A_{n-1} (x_1) A_{n-1} (x_2) \big| \mathbf{1}_{L_{D_1,D_2}} \cdot \omega \big] = \phi\big(\mathbf{1}_{L_{D_1,D_2}} \cdot \omega\big),
\end{equation}
almost surely.

Note also that
\begin{equation}
\begin{split}
& \sup \big\{ \phi(\omega'); \; \omega' \in \Omega \text{ satisfying } \text{supp}(\omega') \subseteq L_{D_1, D_2} \text{ and } 1 \leq \omega'(L_{D_1, D_2}) \leq 2 \big\}\\
& \leq \sup_{l_1, l_2 \in L_{D_1,D_2}} \; \mathbb{E}_u[A_{n-1}(x_1, \omega+\delta_{l_1} + \delta_{l_2})]\mathbb{E}_u[A_{n-1}(x_2, \omega+\delta_{l_1} + \delta_{l_2})] \leq q^2_{n-1}(u).
\end{split}
\end{equation}
This, together with \eqref{e:A1A2} implies \eqref{e:w12} and finishes the proof of the lemma.
\end{proof}

\begin{remark}
\label{r:track}
Let us briefly mention here how Lemma~\ref{l:EA1A2} improves the technique employed in Theorem~5.1 of \cite{TW10b} to bound the dependence between $A_{n-1} (x_1)$ and $A_{n-1} (x_2)$. 
Roughly speaking, in (5.16) of \cite{TW10b}, they obtained that
\begin{equation*}
\mathbb{E}_u \left[ A_{n-1} (x_1) A_{n-1} (x_2) \right] \leq p_{n-1}^2(u) + \mathbb{P}_u \big[ \omega(L_{D_1, D_2}) \geq 1 \big].
\end{equation*}
Here, by considering the case $1 \leq \omega(L_{D_1, D_2}) \leq 2$ separately, we can obtain better exponents for our induction relations (see Lemma~\ref{l:p_rec}) allowing this technique to work in the case $d = 3$.
\end{remark}

From Lemma~\ref{l:EA1A2}, it is clear that we will need to bound probabilities of the form $\mathbb{P}_u \big[ \omega(L_{D_1, D_2}) \geq k \big]$, for $k \geq 1$. This is done with the help of the following

\begin{lemma}
\label{l:muL}
\newconstant{muL}
For fixed $x_1, x_2 \in \mathbb{R}^2$ and $s \geq 1$, define the sets $D_1 = S (x_1, s) \times [0,1000]$ and $D_2 = S (x_2, s) \times [0,1000]$, c.f. Lemma~\ref{l:EA1A2}. Then
\begin{equation}
\mu(L_{D_1, D_2}) \leq \useconstant{muL} \frac{s^2}{r^{2}},
\end{equation}
where $r = \dist \{S(x_1,s), S(x_2,s)\}$.
\end{lemma}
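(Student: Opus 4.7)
The plan is to combine a reduction to $1$-neighborhoods of the $D_j$, a disintegration of $\mu$ over directions, and a careful geometric analysis of projections onto planes orthogonal to those directions. First, since $C(l)\cap D_j\neq\varnothing$ if and only if $l$ meets $\tilde D_j:=\{p\in\mathbb{R}^3:\dist(p,D_j)\leq 1\}$, and $\tilde D_j\subseteq S(x_j,s+1)\times[-1,1001]$, it is enough to bound the measure of $\tilde L:=\{l:l\cap\tilde D_j\neq\varnothing,\,j=1,2\}$. Using the definition $\mu=\alpha(\lambda\otimes\nu)$ from \eqref{e:mu} and parametrizing $\theta\in SO_3$ by its direction $v:=\theta(e_3)\in S^2$, one obtains the integral-geometric identity
\begin{equation*}
\mu(\tilde L)=\int_{S^2}|P_v(\tilde D_1)\cap P_v(\tilde D_2)|\,d\bar\sigma(v),
\end{equation*}
where $P_v$ is orthogonal projection onto $v^\perp$, $|\cdot|$ is $2$-dimensional Lebesgue measure on $v^\perp$, and $\bar\sigma$ is the normalized uniform probability measure on $S^2$ (pushforward of $\nu$).

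Direction-by-direction, a short computation shows that $P_v(\tilde D_j)$ is the Minkowski sum of an ellipse with meridional semi-axis $(s+1)\cos\psi$ and zonal semi-axis $(s+1)$ (the shadow of the horizontal disk of radius $s+1$) and a meridional segment of length $1002\sin\psi$ (the shadow of the vertical axis), where $\psi\in[0,\pi/2]$ denotes the angle between $v$ and $e_3$; consequently $|P_v(\tilde D_j)|\leq\pi(s+1)^2\cos\psi+2004(s+1)\sin\psi$. Since $P_v(\tilde D_1)$ and $P_v(\tilde D_2)$ are translates of one another by $P_v(w_0)$, with $w_0:=(x_1-x_2,0)\in\mathbb{R}^3$ horizontal and $|w_0|\geq r$, non-emptiness of the intersection forces $P_v(w_0)$ to lie in the difference body, yielding the two constraints
\begin{equation*}
\text{(I)}\ \ |w_0|\cos\psi|\cos\phi|\leq 2(s+1)\cos\psi+1002\sin\psi,\qquad\text{(II)}\ \ |w_0||\sin\phi|\leq 2(s+1),
\end{equation*}
where $\phi$ is the azimuth of $v$ measured from $\hat w_0:=w_0/|w_0|$.

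In the main regime where $r$ exceeds a fixed multiple of $s$, constraint (II) confines $\phi$ to two arcs of total length at most $c(s+1)/r$ near $0$ and $\pi$ on which $|\cos\phi|$ is bounded below by a positive constant; constraint (I) then forces $\cot\psi\leq c'/r$, so $\psi$ lies in an interval of length at most $c'/r$ around $\pi/2$. On this thin tube of directions the bound on $|P_v(\tilde D_j)|$ simplifies to $c''(s+1)$, so the double integral is controlled by the product of the solid-angle bound $c'''(s+1)/r^2$ and the pointwise area bound $c''(s+1)$, yielding $\mu(\tilde L)\leq\useconstant{muL}\,s^2/r^2$ after using $s\geq 1$. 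In the complementary regime where $r$ is comparable to or smaller than $s$, the right-hand side $s^2/r^2$ is bounded below by a positive constant, and a cruder variant of the same analysis (or the trivial bound $\mu(\tilde L)\leq\mu(L_{\tilde D_1})$ when $r$ is of order one) can be absorbed into $\useconstant{muL}$.

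The main obstacle is the careful geometric accounting that produces the asymmetry between the two directional factors: a factor of $(s+1)/r$ comes from the zonal constraint (II) and a separate factor of $1/r$ comes from the meridional constraint (I), as opposed to the two $(s+1)/r$ factors that would arise for two balls of radius $s$. The essential feature is that the slab height $1000$ is a fixed constant, so projections along near-horizontal directions produce slender rectangles of area of order $s+1$ rather than $(s+1)^2$; this is precisely what forces the numerator to be $s^2$ rather than $s^4$.
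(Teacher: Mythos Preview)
Your approach via the integral-geometric identity $\mu(\tilde L)=\int_{S^2}|P_v(\tilde D_1)\cap P_v(\tilde D_2)|\,d\bar\sigma(v)$ is genuinely different from the paper's and works cleanly in the main regime $r\geq Ms$. There, constraint (II) confines $\phi$ to arcs of length $O((s+1)/r)$, constraint (I) forces $\psi$ into an interval of length $O(1/r)$ near $\pi/2$, and the area bound $|P_v(\tilde D_j)|=O(s+1)$ on that tube gives the result. That part is fine.

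The gap is in the complementary regime, specifically the sub-range where both $r$ and $s$ are large but $r\ll s$ (for concreteness, think of $r=100$, $s=10^6$). Here neither remedy you propose works. The trivial bound $\mu(\tilde L)\leq\mu(L_{\tilde D_1})=O(s^2)$ is useful only when $r$ is bounded, since the target $s^2/r^2$ is then comparable to $s^2$; once $r$ is large this is off by a factor $r^2$. And your constraints (I), (II) are obtained by replacing the difference body $K-K$ (an ellipse Minkowski-summed with a segment) by its bounding \emph{rectangle} in the $(e_\phi,e_\psi)$ frame. When $|w_0|=r+2s$ is only slightly larger than $2(s+1)$, the vector $P_v(w_0)$ easily lies in that rectangle for generic $\phi$ and all $\psi$, so (I) and (II) become essentially vacuous. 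For instance at $\psi=0$ and $\phi=\pi/4$ both (I) and (II) hold (each reads $|w_0|/\sqrt 2\leq 2(s+1)$, true when $r\lesssim s$), yet the shadows are disjoint disks and the intersection is empty. The result is that your bounds give only $O(s^2)$ in this sub-range, not $O(s^2/r^2)$.

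This is fixable within your framework by keeping the exact ellipsoidal constraint $P_v(w_0)\in K-K$ rather than its rectangular relaxation: writing $X=|w_0||\cos\phi|$ and $Y=|w_0|^2-4(s+1)^2=(r-2)(r+4s+2)$, the true meridional condition becomes $X-\sqrt{X^2-Y}\leq 1002\tan\psi$, i.e.\ $\tan\psi\geq Y/\bigl(1002(X+\sqrt{X^2-Y})\bigr)$, which does force $\psi$ near $\pi/2$ uniformly and recovers the missing factor. But this requires a substantially more careful case analysis than the single sentence you devote to it.

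By contrast, the paper's proof sidesteps all of this with a short covering argument: any cylinder meeting both $D_1$ and $D_2$ must (by convexity, and because its entry/exit points stay in the slab $[0,1000]$) cross the lateral surfaces $R_j=\partial S(x_j,s)\times[0,1000]$, which can be covered by $O(s)$ unit balls; one then sums the known two-ball estimate $\mu(L_{B_1,B_2})\leq c/r^2$ from \cite{TW10b} over the $O(s^2)$ pairs. This works uniformly for all $r\geq 4$ without any regime splitting.
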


\begin{proof}
In case $r<4$, the result follows easily from Lemma~2.2 of \cite{TW10b} using the fact that $L_{D_1,D_2} \subset L_{D_1}$ and $s \geq 1$.

Supposing that $r \geq 4$, for $i =1,2$, let $R_i = \partial{S}(x_i, s) \times [0,1000]$ and consider a covering $\mathcal{R}_i$ of $R_i$ with no more than $cs$ balls of radius one and centered in a point of $R_i$.
By the convexity of a cylinder $C \in \mathbb{C}$, if $C$ intersects both $D_1$ and $D_2$, then it ought to intersect $R_1$ and $R_2$.
Therefore it touches at least two balls $B_1 \in \mathcal{R}_1$ and $B_2 \in \mathcal{R}_2$, and the centers of $B_1$ and $B_2$ are within distances at least $4$.
Using \cite{TW10b}, Lemma 3.1, with $d=3$ we have that
\begin{equation}
\mu (L_{D_1, D_2}) \leq \sum_{\substack{B_1 \in \mathcal{R}_1\\ B_2 \in \mathcal{R}_2}} \mu(L_{B_1, B_2}) \leq \frac{\useconstant{muL} s^2}{r^{2}}
\end{equation}
This finishes the proof of the lemma.
\end{proof}

From now on we fix the parameter $u > 0$ assuming it to be not larger than one. This assumption is made only to simplify the calculations. Since the value of $u$ will be fixed, we omit it in the notations $p_n(u)$ and $q_n(u)$, writing simply $p_n$ and $q_n$.

In Lemmas~\ref{l:p_rec} and \ref{l:q_rec} below, we develop a system of recurrence relation between $p_n$'s and $q_n$'s.

\begin{lemma}
\label{l:p_rec}
\newconstant{prec}
There exists a positive constant $\useconstant{prec}$ such that, for any $(a_n)_{n \geq 0}$ as in \eqref{e:new_scales} and for all $n \geq 1$,
\begin{equation}
\label{e:p_rec}
p_n \leq \useconstant{prec}(a_{n-1}^{\gamma-1})^2 \big[ p_{n-1}^2 + (a_{n-1}^{1 - \gamma})^6 + (a_{n-1}^{1 - \gamma})^2 q_{n-1}^2 \big].
\end{equation}
Recall that $\gamma = 7/6$ and note that $\useconstant{prec}$ does not depend on the choice of $a_0 \geq 288^6$.
\end{lemma}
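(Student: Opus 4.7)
The plan is to combine the three input lemmas -- \ref{l:pbound}, \ref{l:EA1A2}, and \ref{l:muL} -- together with elementary bounds on the Poisson point process. By Lemma~\ref{l:pbound}, applied for instance with $i_1 = 1$, $i_2 = 2$, we already have
\begin{equation*}
p_n \leq \useconstant{entrop}^2 \big(a_{n-1}^{\gamma-1}\big)^2 \sup_{x_1 \in \mathcal{H}^{1}_n, x_2 \in \mathcal{H}^{2}_n} \mathbb{E}_u[A_{n-1}(x_1)A_{n-1}(x_2)],
\end{equation*}
using the identity \eqref{e:an_ratio}. Lemma~\ref{l:EA1A2} then decomposes each of these expectations into three contributions, and so the entire task reduces to controlling the two probabilities $\mathbb{P}_u[\omega(L_{D_1,D_2})\ge 3]$ and $\mathbb{P}_u[1 \le \omega(L_{D_1,D_2}) \le 2]$ uniformly in $x_1 \in \mathcal{H}^{1}_n$ and $x_2 \in \mathcal{H}^{2}_n$.

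The core of the proof is therefore the geometric estimate on $\mu(L_{D_1,D_2})$. Here $D_j = S(x_j,a_{n-1}) \times [0,1000]$ with $x_j \in \mathcal{H}^{i_j}_n$, so by definition of $\mathcal{H}^{i}_n$ in \eqref{e:Hin} each $x_j$ lies within $a_{n-1}$ of the sphere $\partial S(x_0,(i_j+1)a_n/6)$ for some $x_0 \in \hex$. Since $|i_1-i_2| \ge 1$, these two concentric spheres are separated by at least $a_n/6$, and after accounting for the disks $S(x_j,a_{n-1})$, one finds $r := \dist(S(x_1,a_{n-1}),S(x_2,a_{n-1})) \ge a_n/6 - 4 a_{n-1}$. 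The lower bound $a_n \ge 288\, a_{n-1}$ from \eqref{e:a_lower0} then forces $r \ge c\, a_n$, so Lemma~\ref{l:muL} with $s = a_{n-1}$ yields
\begin{equation*}
\mu(L_{D_1,D_2}) \leq c\, \frac{a_{n-1}^{2}}{a_n^{2}} = c\, (a_{n-1}^{1-\gamma})^2.
\end{equation*}

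Given this geometric bound, the two Poissonian tails are immediate. Since $u \le 1$ and since $u\mu(L_{D_1,D_2}) \le c\,(a_{n-1}^{1-\gamma})^2$ is bounded, the elementary inequality $\mathbb{P}[\mathrm{Poi}(\lambda) \ge k] \le C_k \lambda^{k}$ for bounded $\lambda$ gives
\begin{equation*}
\mathbb{P}_u[\omega(L_{D_1,D_2}) \geq 3] \leq c\,(a_{n-1}^{1-\gamma})^6, \qquad \mathbb{P}_u[1 \leq \omega(L_{D_1,D_2}) \leq 2] \leq c\,(a_{n-1}^{1-\gamma})^2.
\end{equation*}
Plugging these bounds back into the conclusion of Lemma~\ref{l:EA1A2}, and then into the supremum appearing in Lemma~\ref{l:pbound}, produces exactly \eqref{e:p_rec} with an appropriate constant $\useconstant{prec}$.

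The main obstacle is the geometric step: one must verify that the separation $r$ between the two candidate disks is genuinely of order $a_n$ and not spoiled by the $a_{n-1}$-neighborhoods used to define $\mathcal{H}^{i}_n$. This is precisely where the hypothesis $a_{n+1} \geq 288\, a_n$ from \eqref{e:a_lower0} enters, confirming why the constant $288$ in the definition of the scales was chosen. Everything else is a direct manipulation of Poisson tail bounds with constants absorbed into $\useconstant{prec}$, and no dependence of $\useconstant{prec}$ on $a_0$ arises because all the estimates above are uniform in $n$ as soon as \eqref{e:a_lower0} holds.
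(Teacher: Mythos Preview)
Your proof is correct and follows the same route as the paper: apply Lemmas~\ref{l:pbound} and \ref{l:EA1A2}, use \eqref{e:a_lower0} together with Lemma~\ref{l:muL} to get $\mu(L_{D_1,D_2}) \le c\,(a_{n-1}^{1-\gamma})^2$, and then control the two Poisson tails. One small imprecision worth noting: the two spheres need not be concentric, since $x_1 \in \mathcal{H}^{i_1}_n$ and $x_2 \in \mathcal{H}^{i_2}_n$ may sit near spheres centered at \emph{different} points of $\hex$; but as $\diam(\hex) \le 2000 \le a_{n-1}$, this costs only an additional $O(a_{n-1})$ in the separation estimate and the bound $r \ge c\,a_n$ (the paper records $r \ge a_n/10$) still holds.
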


\begin{proof}
We take $x_1 \in \mathcal{H}^{i_1}_n$ and $x_2 \in \mathcal{H}^{i_2}_n$ and $D_1$, $D_2$ as in Lemma~\ref{l:EA1A2}. Applying Lemma~\ref{l:muL} with $s = a_{n-1}$ and $r = \text{dist}(D_1, D_2)$ (which by \eqref{e:a_lower0} is greater or equal to ${a_n}/{10}$).
We then deduce that,
\begin{equation}
\label{e:muL}
\mu \big( L_{D_1,D_2} \big) \leq c \Big(\frac{a_{n-1}}{a_n} \Big)^2 = c \big(a_{n-1}^{1-\gamma}\big)^2 \; (\leq c), \text{ for all $n \geq 1$.}
\end{equation}

Recalling that $\mathbb{P}_u$ is a Poisson point process in $\mathbb{L}$ having intensity measure $u\mu$, we can infer that
\begin{equation}
\label{e:mu2}
\mathbb{P}_u \big[ 1 \leq \omega(L_{D_1,D_2}) \leq 2 \big] \leq \mathbb{P}_u \big[\omega(L_{D_1, D_2}) \geq 1 \big] \leq u \mu (L_{D_1,D_2})
\end{equation}
and
\begin{equation}
\label{e:mu3}
\begin{split}
\mathbb{P}_u \big[\omega(L_{D_1, D_2}) \geq 3 \big] & \leq \exp\{u \mu ( L_{D_1,D_2})\} - 1 - u\mu ( L_{D_1,D_2}) -\frac{u^2\mu ( L_{D_1,D_2})^2}2\\
& \leq c u^3 \mu ( L_{D_1,D_2})^3, \text{ for all $n \geq 1$,}
\end{split}
\end{equation}
where in the last inequality we used a Taylor expansion together with $u \leq 1$ and with the fact that $u \mu ( L_{D_1,D_2}) \leq c$ by \eqref{e:muL}.

We now use Lemma~\ref{l:EA1A2}, together with \eqref{e:muL} and the two above bounds to obtain (recalling that $u \leq 1$) that:
\begin{equation}
\label{e:EAA}
\mathbb{E}_u \left[ A_{n-1} (x_1) A_{n-1} (x_2) \right] \leq   p_{n-1}^2 + c(a_{n-1}^{1 - \gamma})^6 + c (a_{n-1}^{1- \gamma})^2 q_{n-1}^2, \text{ for $n\geq1$}.
\end{equation}
Taking the supremum over $x_1 \in \mathcal{H}^{i_1}$ and $x_2 \in \mathcal{H}^{i_2}$ as in Lemma~\ref{l:pbound}, this leads to \eqref{e:p_rec}, concluding the proof of Lemma~\ref{l:p_rec}.
\end{proof}

In order to analyze the decay of $q_n(u)$, we will make use of a suitable property of the sets $\mathcal{H}^i_n$ which we now discuss.
Roughly speaking, given two cylinders $C_1 = C(l_1)$ and $C_2 = C(l_2)$ with $l_1$ and $l_2$ as in as in the definition of $q_n(u)$ (see \eqref{e:qn}), we need to bound the number of points in $\mathcal{H}^i_n$ that these cylinders may approach.
Intuitively speaking, the only way in which a given cylinder $C$ may approach too many points in $\mathcal{H}^i_n$ is if $C$ is `approximately tangent' to the sphere $\partial S ( x_0, {(i+1)a_n}/{6})$, see Figure~\ref{f:Hin}.
However, a given cylinder can only be `approximately tangent' to at most one of these spheres, say for some $\bar i \in \{1, \dots, 4\}$.
This explains why we allow $i$ to assume four different values: If we are given two cylinders $C_1$ and $C_2$ as above, we can still find $i_1$ and $i_2$ for which both $C_1$ and $C_2$ are `secant' to the spheres corresponding to $i_1$ and $i_2$.
Consequently, the number of balls in the covering corresponding to $\mathcal{H}^{i_1}_n$ and $\mathcal{H}^{i_2}_n$ that are intersected by $C_1$ and $C_2$ is bounded by an universal constant.
This is made precise in the following

\begin{lemma}
 \label{l:cove_3}
\newconstant{secant} There exists a constant $\useconstant{secant} > 0$ such that the following holds. For any $(a_n)_{n \geq 0}$ as in \eqref{e:new_scales} if we let $\mathcal{H}_n^i$ be defined as in (\ref{e:Hin}), then, for all $n \geq 1$ and every pair of cylinders $C_1, C_2 \in \mathcal{C}$, there exist distinct $i_1, i_2 \in \{1, \ldots , 4\}$ such that
\begin{equation}
\label{eq:cove_3}
\Big| \big\{ x \in \mathcal{H}_n^{i_j} ; ~ S \left( x, \tfrac{a_{n-1}}{10} \right) \times [0,1000] \cap \big( C_1 \cup C_2 \big) \neq \emptyset \big\} \Big| \leq \useconstant{secant},
\end{equation}
for any $j=1,2$. Note that, in accordance with our convention on constants, $\useconstant{secant}$ does not depend on the specific choice of the scale parameter $a_0$.
\end{lemma}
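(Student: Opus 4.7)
The plan is to use a pigeonhole argument. Denote by $N_i(C)$ the cardinality appearing in \eqref{eq:cove_3} with $C_1 \cup C_2$ replaced by a single cylinder $C$. I will establish that for any cylinder $C \in \mathbb{C}$ there exists a universal constant $K$ with $N_i(C) \leq K$ for at least three of the four indices $i \in \{1,2,3,4\}$. Applied separately to $C_1$ and $C_2$, at most two indices get spoiled, leaving at least $4 - 2 = 2$ good indices $i_1, i_2$ for which $N_{i_j}(C_k) \leq K$ for $k=1,2$, so $N_{i_j}(C_1 \cup C_2) \leq 2K$.

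First I would reduce from the three-dimensional slab to a planar problem: $S(x, a_{n-1}/10) \times [0,1000]$ meets $C$ iff $S(x, a_{n-1}/10)$ meets $\pi(C)$. A vertical-axis cylinder projects to a disk of radius $1$ in $\mathbb{R}^2$, so the bound on $N_i(C)$ is immediate. Otherwise $\pi(C)$ is a strip of width $2$ about a line $\pi(l) \subset \mathbb{R}^2$, and the contributing $x$ lie in the $a_{n-1}/5$-neighborhood of this line. Meanwhile every $x \in \mathcal{H}_n^i$ lies in a thick annulus around $\hex$ of median radius $R_i := (i+1) a_n/6$ and radial half-thickness at most $\lambda a_{n-1}$ for a universal $\lambda$, using $\diam(\hex) = O(1) \ll a_{n-1}$.

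Setting $D := \dist(0, \pi(l))$, the core geometric claim is that $N_i(C) \leq K$ whenever $D \notin I_i := [\tfrac{5 R_i}{6},\, R_i + \lambda a_{n-1}]$. For $D > R_i + \lambda a_{n-1}$ the fattened strip misses the thick annulus, yielding $N_i(C) = 0$. For $D \leq \tfrac{5 R_i}{6}$ the line $\pi(l)$ crosses the thick annulus in two chords, each of length
\[
L \;=\; \frac{r_2^2 - r_1^2}{\sqrt{r_2^2 - D^2} + \sqrt{r_1^2 - D^2}}, \qquad r_1 = R_i - \lambda a_{n-1},\; r_2 = R_i + \lambda a_{n-1}.
\]
The numerator is $O(R_i a_{n-1})$, while the denominator is bounded below by $2\sqrt{r_1^2 - (5 R_i/6)^2} \geq c\, R_i$, using $R_i \geq 12\, a_{n-1}$, a consequence of $a_n \geq 288\, a_{n-1}$ in \eqref{e:a_lower0}. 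Hence $L = O(a_{n-1})$, so the intersection of the fattened strip with the thick annulus has area $O(a_{n-1}^2)$ and contains at most a universal constant number of points of $\mathcal{J}_n$ (which has spacing $a_{n-1}/10$).

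It remains to check that $I_1, \dots, I_4$ are pairwise disjoint: the gap between consecutive ones is $\tfrac{5 R_{i+1}}{6} - R_i - \lambda a_{n-1} = \tfrac{(4-i) a_n}{36} - \lambda a_{n-1}$, which is positive for $i=1,2,3$ by \eqref{e:a_lower0}. Each cylinder then has $D \in I_i$ for at most one $i$, and the pigeonhole step closes with $\useconstant{secant} := 2K$. The hard part will be the chord-length bound for $L$: controlling it by $O(a_{n-1})$ uniformly in $n$ while simultaneously keeping the bad intervals $I_i$ disjoint is exactly what the rapid growth $a_n \geq 288\, a_{n-1}$ from \eqref{e:a_lower0} is designed to guarantee.
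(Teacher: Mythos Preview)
Your proof is correct and follows essentially the same strategy as the paper: project each cylinder to a line in $\mathbb{R}^2$, use the distance of that line to the origin to identify at most one ``bad'' annulus per cylinder (the one to which the line is nearly tangent), and for the remaining indices bound the chord length of the line through the annulus by $O(a_{n-1})$, so that only $O(1)$ points of $\mathcal{J}_n$ are touched. The only cosmetic differences are your choice of bad intervals $I_i=[5R_i/6,\,R_i+\lambda a_{n-1}]$ versus the paper's $[(2i+1)a_n/12,\,(2i+3)a_n/12)$, and your explicit algebraic bound on $L$ in place of the paper's derivative estimate for the function $f(x,\epsilon)=(1+\epsilon)\sqrt{(1+\epsilon)-x^2}-(1-\epsilon)\sqrt{(1-\epsilon)-x^2}$.
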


\begin{proof}The proof of this lemma is postponed to the Appendix. \end{proof}

Our next aim is to obtain a recursive equation for $q_n$'s, which resembles the one obtained in \eqref{e:p_rec} for the $p_n$'s. In order to do this, we first establish a result analogous to the Lemmas~\ref{l:pbound} and \ref{l:EA1A2}.

\begin{lemma}
\label{l:Adelta}
Fix $x_0 \in \hex$, $n \geq 1$ and $l_1, l_2 \in \mathbb{L}$. 
Given $i_1$ and $i_2$ as in Lemma~\ref{l:cove_3} we have that
\begin{equation}
\label{e:Adelta}
\begin{split}
\mathbb{E}_u [A_n (x_0, \omega & + \delta_{l_1} + \delta_{l_2})] \leq \useconstant{entrop}^2\Big( \frac{a_n}{a_{n-1}}\Big)^2 \sup_{{x_1 \in \mathcal{H}^{i_1}_n}, \;  {x_2 \in \mathcal{H}^{i_2}_n}} \mathbb{E}_u \left[ A_{n-1} (x_1) A_{n-1} (x_2) \right]\\
& + \useconstant{secant} \; \useconstant{entrop} \Big( \frac{a_n}{a_{n-1}}\Big) \sup_{{x_1 \in \mathcal{H}^{i_1}_n}, \;  {x_2 \in \mathcal{H}^{i_2}_n}}
\mathbb{E}_u \Big[ \begin{aligned}[t]
& A_{n-1} (x_1) A_{n-1} (x_2, \omega + \delta_{l_1} + \delta_{l_2})\\
& + A_{n-1} (x_1, \omega + \delta_{l_1} + \delta_{l_2}) A_{n-1} (x_2) \Big]
\end{aligned}\\
& + \useconstant{secant}^2 \sup_{{x_1 \in \mathcal{H}^{i_1}_n}, \;  {x_2 \in \mathcal{H}^{i_2}_n}} \mathbb{E}_u \big[ A_{n-1} (x_1, \omega + \delta_{l_1} + \delta_{l_2}) A_{n-1} (x_2, \omega + \delta_{l_1} + \delta_{l_2}) \big]
\end{split}
\end{equation}
\end{lemma}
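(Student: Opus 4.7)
The plan is to combine the covering argument that proves Lemma~\ref{l:pbound} with the refined counting provided by Lemma~\ref{l:cove_3}. The latter says that, no matter which two cylinders $C(l_1), C(l_2)$ have been added deterministically, one can select indices $i_1, i_2 \in \{1,\ldots,4\}$ so that the added cylinders touch only a bounded number of covering balls at the smaller scale inside $\mathcal{H}^{i_1}_n$ and $\mathcal{H}^{i_2}_n$. The three terms on the right-hand side of \eqref{e:Adelta} will correspond to a trichotomy based on which of the two covering balls at scale $a_{n-1}/10$ happen to be touched by these added cylinders.

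I would first repeat verbatim the geometric reasoning from Lemma~\ref{l:pbound}: by Lemma~\ref{l:cove_1}, for any distinct $i_1, i_2 \in \{1,\ldots,4\}$ one has
\begin{equation*}
\{A_n(x_0, \omega + \delta_{l_1} + \delta_{l_2}) = 1\} \subseteq \bigcup_{x_1 \in \mathcal{H}^{i_1}_n, \, x_2 \in \mathcal{H}^{i_2}_n} \bigcap_{j=1,2} \{A_{n-1}(x_j, \omega + \delta_{l_1} + \delta_{l_2}) = 1\}.
\end{equation*}
Then I would invoke Lemma~\ref{l:cove_3} with $C_1 = C(l_1)$, $C_2 = C(l_2)$ to choose specific $i_1, i_2$ for which, setting
\begin{equation*}
T_j := \big\{x \in \mathcal{H}^{i_j}_n : S(x, a_{n-1}/10) \times [0,1000] \cap (C(l_1) \cup C(l_2)) \neq \emptyset\big\},
\end{equation*}
one has $|T_j| \leq \useconstant{secant}$ for $j=1,2$.

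Next I would split the union over $(x_1,x_2)$ into four sub-regions determined by whether $x_1 \in T_1$ and whether $x_2 \in T_2$. The crucial observation is that when $x_j \notin T_j$ the added cylinders do not touch the starting ball of the event $A_{n-1}(x_j,\cdot)$, and the roughness of $H$ then allows one to replace $A_{n-1}(x_j, \omega + \delta_{l_1} + \delta_{l_2})$ by $A_{n-1}(x_j, \omega)$ inside the corresponding factor. Using Lemma~\ref{l:cove_2} together with $|T_j| \leq \useconstant{secant}$, the four sub-regions collect into the three terms of \eqref{e:Adelta}: when both $x_1 \notin T_1$ and $x_2 \notin T_2$, the prefactor $\useconstant{entrop}^2(a_n/a_{n-1})^2$ accompanies $\sup \mathbb{E}_u[A_{n-1}(x_1)A_{n-1}(x_2)]$; the two configurations in which exactly one $x_j \in T_j$ combine into the prefactor $\useconstant{secant}\,\useconstant{entrop}(a_n/a_{n-1})$ against the symmetric sum of mixed expectations; and the case $x_1 \in T_1, x_2 \in T_2$ yields the prefactor $\useconstant{secant}^2$ times the fully perturbed product. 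Taking the supremum over $x_0 \in \hex$ then delivers \eqref{e:Adelta}.

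The principal obstacle is verifying the replacement $A_{n-1}(x_j, \omega + \delta_{l_1} + \delta_{l_2}) = A_{n-1}(x_j, \omega)$ when $x_j \notin T_j$. Since $A_{n-1}$ is monotone in $\omega$, only the direction $\leq$ is nontrivial: one must rule out that the added cylinders, while avoiding the starting ball $S(x_j, a_{n-1}/10) \times [0,1000]$, nevertheless create a new connected path in $\pi(\mathcal{L}(\omega + \delta_{l_1} + \delta_{l_2}) \cap H)$ joining $S(x_j, a_{n-1}/10)$ to $\partial S(x_j, a_{n-1})$ that was not already present in $\pi(\mathcal{L}(\omega) \cap H)$. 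This is precisely where the roughness of $H$ must be exploited, as each connected component of $\pi(C(l_k) \cap H)$ is too small to bridge the annulus on its own.
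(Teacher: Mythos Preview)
Your overall strategy is exactly the paper's: the same covering inclusion coming from Lemma~\ref{l:cove_1}, the same ``touched'' sets (the paper calls them $\mathcal{D}^j_n$, your $T_j$), and the same four-way split of the sum over $(x_1,x_2)$ according to membership in $T_1, T_2$, with the counting supplied by Lemmas~\ref{l:cove_2} and~\ref{l:cove_3}.

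Where you diverge from the paper is precisely at the step you flag as ``the principal obstacle'', and there your proposed resolution is wrong. The paper asserts the replacement
\[
A_{n-1}(x_j,\omega+\delta_{l_1}+\delta_{l_2})=A_{n-1}(x_j,\omega)\quad\text{for }x_j\notin T_j
\]
as ``clear from the definitions''. With $T_j$ defined via the radius $a_{n-1}/10$ (as in the statement of Lemma~\ref{l:cove_3} and as you write it), this is \emph{not} clear, and your appeal to the roughness of $H$ does not rescue it. The issue is not whether a single component of $\pi(C(l_k)\cap H)$ can span the whole annulus; it is whether such a component can serve as a bridge between two pieces of $\pi(\mathcal{L}(\omega)\cap H)$ lying in the annulus $S(x_j,a_{n-1})\setminus S(x_j,a_{n-1}/10)$. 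Nothing in \eqref{e:smallpieces} rules that out: one may well have $A_{n-1}(x_j,\omega)=0$ while the added cylinders, without touching $S(x_j,a_{n-1}/10)\times[0,1000]$, connect an $\omega$-path emanating from the inner disk to another $\omega$-path reaching the outer boundary.

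The correct fix is much simpler and does not use $H$ at all: define $T_j$ with radius $a_{n-1}$ instead of $a_{n-1}/10$. Then $x_j\notin T_j$ means $C(l_1)\cup C(l_2)$ is disjoint from $S(x_j,a_{n-1})\times[0,1000]$; since $H\subset\mathbb{R}^2\times[0,1000]$, the set $\pi\bigl((C(l_1)\cup C(l_2))\cap H\bigr)$ is disjoint from $S(x_j,a_{n-1})$, and $A_{n-1}(x_j,\cdot)$, which is determined by $\pi(\mathcal{L}\cap H)\cap S(x_j,a_{n-1})$, is trivially unaffected. The bound $|T_j|\le\useconstant{secant}$ survives this enlargement: the proof of Lemma~\ref{l:cove_3} in the Appendix already passes through the larger set in \eqref{eq:cove_3_2} with radius $2a_{n-1}$, so the same constant $\useconstant{secant}$ works. (The paper's own $\mathcal{D}^j_n$ is written with $a_{n-1}/10$, but should be read with $a_{n-1}$ for the same reason; with that reading, \eqref{e:A_no_d} really is clear from the definitions.)
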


Before going into the proof of Lemma~\ref{l:Adelta}, let us briefly discuss the meaning of the three above terms. Roughly speaking these  terms respectively represent the cases where the lines $l_1$ and $l_2$ influence: `none', `one' or `both' random variables $A_{n-1}(x_1)$ and $A_{n-1}(x_2)$. What is important to notice in these three terms is that, although bounding their corresponding expectations gets harder and harder (as the influence of $l_1$ and $l_2$ increases), the combinatorial factors multiplying these expectations are getting smaller. This trade-off was made possible by Lemma~\ref{l:cove_3} as we will see in the proof below.

\begin{proof}
Recall from Lemma~\ref{l:cove_1} that
\begin{equation}
\partial S \big(x_0, \big(\tfrac{i+1}{6}\big) a_n \big) \subseteq {\textstyle \bigcup \limits_{x \in \mathcal{H}^{i_j}_n}} S \big(x,\tfrac{a_{n-1}}{10} \big), \text{ for $j = 1,2$.}
\end{equation}
Since any path in $\mathbb{R}^2$ connecting the ball $S(x_0, a_n/10)$ to $\partial S(x_0, a_n)$ must intersect both $S (x_1, a_{n-1}/10)$ and $S (x_2, a_{n-1}/10)$, for some $x_1 \in \mathcal{H}^{i_1}_n$ and $x_2 \in \mathcal{H}^{i_2}_n$, it follows that
\begin{equation}
\label{e:EAdelta}
\mathbb{E}_u [A_n (x_0, \omega + \delta_{l_1} + \delta_{l_2})] \leq \sum_{\substack{x_1 \in \mathcal{H}^{i_1}_n, \\  x_2 \in \mathcal{H}^{i_2}_n}} \mathbb{E}_u \big[ A_{n-1} (x_1, \omega + \delta_{l_1} + \delta_{l_2}) A_{n-1} (x_2, \omega + \delta_{l_1} + \delta_{l_2}) \big].
\end{equation}

The lines $l_1$ and $l_2$ may or not influence the functions $A_{n-1}(\cdot)$ appearing above. To distinguish these cases, we defined the sets $\mathcal{D}^{j}_n (x_0, l_1,l_2) \subseteq \mathcal{H}^{i_j}_n$ for $j = 1,2$ by
\begin{equation}
\mathcal{D}^{j}_n (l_1,l_2) = \big\{x \in \mathcal{H}^{i_j}_n; S(x, a_{n-1}/10) \times [0,1000] \cap \big(C(l_1) \cup C(l_2) \big) \not = \varnothing \big\}.
\end{equation}
It is clear from the definitions of $\mathcal{D}^{j}_n$ and $A_n(x, \omega)$ that
\begin{equation}
\label{e:A_no_d}
A_{n-1} (x, \omega + \delta_{l_1} + \delta_{l_2}) = A_{n-1} (x, \omega), \text{ for all $x \in \mathcal{H}^{i_j}_n \setminus \mathcal{D}^{j}_n$.}
\end{equation}

Note that by our choice of $i_1$ and $i_2$ as in Lemma~\ref{l:cove_3}, we have that $|\mathcal{D}^{j}_n| \leq \useconstant{secant}$ for $j = 1,2$ and all $n \geq 1$.
By Lemma~\ref{l:cove_2}, $|\mathcal{H}^{i_j}_n| \leq \useconstant{entrop}(a_n/a_{n-1})$ for every $n \geq 1$.
Now, by splitting the sum in  \eqref{e:EAdelta} into the terms where $x_j \in \mathcal{H}^{i_j}_n \setminus \mathcal{D}^{j}_n$ and $x_j \in \mathcal{D}^{j}_n$ ($j=1,2$) we obtain from \eqref{e:A_no_d} the terms in the right-hand side of \eqref{e:Adelta}, finishing the proof of Lemma~\ref{l:Adelta}.
\end{proof}

We now obtain the promised recurrence relation for $q_n$ analogous to \eqref{e:p_rec}.

\begin{lemma}
\label{l:q_rec}
\newconstant{qrec}
There exists a positive constant $\useconstant{qrec}$ such that, for any $(a_n)_{n \geq 0}$ as in \eqref{e:new_scales} and for all $n \geq 1$,
\begin{equation}
\label{e:q_rec}
\begin{split}
q_n \leq \; & \useconstant{qrec} (a^{\gamma-1}_{n-1})^2 \big[ p^2_{n-1} + (a^{1-\gamma}_{n-1})^6 + (a^{1-\gamma}_{n-1})^2 q^2_{n-1} \big]\\
& + \useconstant{qrec} a_{n-1}^{\gamma-1} \big[ p_{n-1}q_{n-1} + (a^{1-\gamma}_{n-1})^2 q_{n-1} + (a_{n-1}^{1-\gamma})^6 \big] + \useconstant{qrec} \big[ q^2_{n-1} + (a^{1-\gamma}_{n-1})^2\big].
\end{split}
\end{equation}
\end{lemma}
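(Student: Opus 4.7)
The plan is to invoke Lemma~\ref{l:Adelta} and bound each of the three expectation terms on its right-hand side by an argument analogous to the one in the proof of Lemma~\ref{l:EA1A2}, namely by partitioning $\Omega$ according to the value of $\omega(L_{D_1,D_2})$, where $D_j = S(x_j,a_{n-1}) \times [0,1000]$. The three geometric inputs from the proof of Lemma~\ref{l:p_rec} will be reused unchanged: $\mu(L_{D_1,D_2}) \leq c(a_{n-1}^{1-\gamma})^2$, and consequently $\mathbb{P}_u[\omega(L_{D_1,D_2}) \geq 1] \leq c(a_{n-1}^{1-\gamma})^2$ and $\mathbb{P}_u[\omega(L_{D_1,D_2}) \geq 3] \leq c(a_{n-1}^{1-\gamma})^6$.

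The first term of \eqref{e:Adelta}, carrying the prefactor $(a_n/a_{n-1})^2 = (a_{n-1}^{\gamma-1})^2$, is estimated directly by combining Lemma~\ref{l:EA1A2} with \eqref{e:muL}--\eqref{e:mu3}, exactly as in \eqref{e:EAA} of the proof of Lemma~\ref{l:p_rec}; this produces the first bracket in \eqref{e:q_rec}. For the third term of \eqref{e:Adelta}, which has only a constant prefactor $\useconstant{secant}^2$, I would split $\Omega$ into just two events: on $\{\omega(L_{D_1,D_2}) = 0\}$ the independence of $\mathbf{1}_{L_{D_1}\setminus L_{D_2}} \cdot \omega$ and $\mathbf{1}_{L_{D_2}\setminus L_{D_1}} \cdot \omega$ (as in the proof of Lemma~\ref{l:EA1A2}) lets each factor be bounded in expectation by $q_{n-1}$, yielding $q_{n-1}^2$; on the complementary event use the trivial bound, which gives $\mathbb{P}_u[\omega(L_{D_1,D_2}) \geq 1] \leq c(a_{n-1}^{1-\gamma})^2$. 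Together these produce the last bracket in \eqref{e:q_rec}.

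The heart of the argument is the middle, \emph{mixed} term, where one factor has the two deterministic lines $\delta_{l_1}+\delta_{l_2}$ and the other does not; its prefactor is $c\,a_{n-1}^{\gamma-1}$. I would split again according to $\omega(L_{D_1,D_2}) \in \{0\}, \{1,2\}, \{\geq 3\}$. On $\{\omega(L_{D_1,D_2}) = 0\}$ the same independence gives a factor bounded by $p_{n-1}$ from $A_{n-1}(x_1,\omega)$ and one bounded by $q_{n-1}$ from $A_{n-1}(x_2,\omega+\delta_{l_1}+\delta_{l_2})$, contributing $p_{n-1}q_{n-1}$. On $\{1 \leq \omega(L_{D_1,D_2})\leq 2\}$, I would condition on $\mathbf{1}_{L_{D_1,D_2}}\cdot\omega$ and use independence of the two remaining Poisson restrictions: the first factor can be absorbed into $q_{n-1}$ by treating the at most two cylinders of $L_{D_1,D_2}$ as the two allowed deterministic lines, whereas the second factor already carries $\delta_{l_1}+\delta_{l_2}$ plus up to two more lines from $L_{D_1,D_2}$ (four in total, exceeding the budget of $q_{n-1}$) so we must fall back on the trivial bound $\leq 1$, producing $\mathbb{P}_u[1\leq \omega(L_{D_1,D_2})\leq 2] \cdot q_{n-1} \leq c(a_{n-1}^{1-\gamma})^2 q_{n-1}$. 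On $\{\omega(L_{D_1,D_2}) \geq 3\}$ the trivial bound contributes $c(a_{n-1}^{1-\gamma})^6$. The symmetric term is handled identically; multiplying by $a_{n-1}^{\gamma-1}$ yields the middle bracket. Finally, taking the supremum over $x_0 \in \hex$ and $l_1,l_2 \in \mathbb{L}$ in the definition \eqref{e:qn} of $q_n$ completes the proof.

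The expected main obstacle is precisely this mixed term: the restriction that $q_{n-1}$ permits only two added lines forces the trivial bound on the heavily decorated factor in the $\{1\leq\omega(L_{D_1,D_2})\leq 2\}$ case, which is why we get the exponent $2$ (rather than $4$) on $(a_{n-1}^{1-\gamma})$ in front of $q_{n-1}$. This trade-off, together with the decreasing combinatorial prefactors $(a_n/a_{n-1})^2$, $a_n/a_{n-1}$, $1$ accompanying the three terms of \eqref{e:Adelta} (where the latter two are available thanks to Lemma~\ref{l:cove_3}), is exactly what allows a genuinely contracting recursion in $(p_n,q_n)$ in dimension three.
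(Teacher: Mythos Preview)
Your proposal is correct and follows essentially the same route as the paper: you invoke Lemma~\ref{l:Adelta}, bound the first summand via \eqref{e:EAA}, the third by splitting on $\{\omega(L_{D_1,D_2})=0\}$ versus its complement, and the mixed term by the three-way split $\{0\},\{1,2\},\{\geq 3\}$, dropping the factor that carries $\delta_{l_1}+\delta_{l_2}$ on $\{1\leq\omega(L_{D_1,D_2})\leq 2\}$ and absorbing the undecorated factor into $q_{n-1}$ after conditioning on $\mathbf{1}_{L_{D_1,D_2}}\cdot\omega$. This is exactly the paper's argument, down to the order in which the cases are treated.
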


\begin{proof}
We first fix arbitrarily $x_0 \in \hex$, $l_1, l_2$ in $\mathbb{L}$ and we take $i_1$ and $i_2$ as in Lemma~\ref{l:cove_3}. The three terms in the above equation will be derived from the corresponding terms in \eqref{e:Adelta}, after taking the supremum.
% By taking the supremum of \eqref{e:Adelta} over all $x_0 \in \hex$, we obtain
% \begin{equation}
% \label{e:qnEAA}
% \begin{split}
% q_n & \leq c\Big( \frac{a_n}{a_{n-1}}\Big)^2 \sup_{{x_1 \in \mathcal{H}^{i_1}_n}, \;  {x_2 \in \mathcal{H}^{i_2}_n}} \mathbb{E}_u \left[ A_{n-1} (x_1) A_{n-1} (x_2) \right]\\
% & + c\Big( \frac{a_n}{a_{n-1}}\Big) \sup_{{x_1 \in \mathcal{H}^{i_1}_n}, \;  {x_2 \in \mathcal{H}^{i_2}_n}}
% \mathbb{E}_u \Big[ \begin{aligned}[t]
% & A_{n-1} (x_1) A_{n-1} (x_2, \omega + \delta_{l_1} + \delta_{l_2})\\
% & + A_{n-1} (x_1, \omega + \delta_{l_1} + \delta_{l_2}) A_{n-1} (x_2) \Big]
% \end{aligned}\\
% & + c \sup_{{x_1 \in \mathcal{H}^{i_1}_n}, \;  {x_2 \in \mathcal{H}^{i_2}_n}} \mathbb{E}_u \big[ A_{n-1} (x_1, \omega + \delta_{l_1} + \delta_{l_2}) A_{n-1} (x_2, \omega + \delta_{l_1} + \delta_{l_2}) \big].
% \end{split}
% \end{equation}
Note that the first term in the right-hand side of \eqref{e:Adelta} can be easily bounded using \eqref{e:EAA}, yielding the first term in \eqref{e:q_rec}.

In order to bound the second term in the right-hand side of \eqref{e:Adelta}, we now show that, for every $x_1 \in \mathcal{H}^{i_1}_n$ and $x_2 \in \mathcal{H}^{i_2}_n$,
\begin{equation}
\label{e:EAAd}
\mathbb{E}_u \big[ A_{n-1} (x_1, \omega) A_{n-1} (x_2, \omega + \delta_{l_1} + \delta_{l_2}) \big] \leq c \big[ p_{n-1}q_{n-1} + (a^{1-\gamma}_{n-1})^2 q_{n-1} + (a^{1-\gamma}_{n-1})^6 \big],
\end{equation}
for all $n \geq 1$.

Indeed, using the same notation as in Lemma~\ref{l:EA1A2} and ideas analogous to those in its proof, we obtain first that
\begin{equation}
\label{e:EAAd1}
\begin{split}
\mathbb{E}_u & \big[ A_{n-1} (x_1) A_{n-1} (x_2, \omega + \delta_{l_1} + \delta_{l_2}), \omega(L_{D_1,D_2}) = 0 \big]\\
& \leq \mathbb{E}_u \Big[ A_{n-1} \big(x_1, \mathbf{1}_{L_{D_1} \setminus L_{D_2}} \cdot \omega \big) A_{n-1} \big(x_2, \mathbf{1}_{L_{D_2} \setminus L_{D_1}} \cdot \omega + \delta_{l_1} + \delta_{l_2} \big) \Big]\\
& = \mathbb{E}_u \Big[ A_{n-1} \big(x_1, \mathbf{1}_{L_{D_1} \setminus L_{D_2}} \cdot \omega \big) \Big]
\mathbb{E}_u \Big[ A_{n-1} \big(x_2, \mathbf{1}_{L_{D_2} \setminus L_{D_1}} \cdot \omega + \delta_{l_1} + \delta_{l_2} \big) \Big]\\
& \leq p_{n-1}q_{n-1}.
\end{split}
\end{equation}
Furthermore, we have that
\begin{equation}
\label{e:EAAd2}
\begin{split}
\mathbb{E}_u & \big[ A_{n-1} (x_1) A_{n-1} (x_2, \omega + \delta_{l_1} + \delta_{l_2}), \omega(L_{D_1,D_2}) \geq 3 \big]\\
& \leq \mathbb{P}_u \big[ \omega(L_{D_1,D_2}) \geq 3 \big] \leq c(a^{1-\gamma}_{n-1})^6,
\end{split}
\end{equation}
where the last inequality follows from \eqref{e:mu3} and \eqref{e:muL}.

Finally, we consider
\begin{equation}
\label{e:EAAd3}
\begin{split}
\mathbb{E}_u & \big[ A_{n-1} (x_1) A_{n-1} (x_2, \omega + \delta_{l_1} + \delta_{l_2}), 1\leq \omega(L_{D_1,D_2}) \leq 2 \big]\\
& \leq \mathbb{E}_u \Big[ A_{n-1} \big(x_1, \mathbf{1}_{L_{D_1} \setminus L_{D_2}} \cdot \omega + \mathbf{1}_{L_{D_1,D_2}} \cdot \omega\big) , 1\leq \omega(L_{D_1,D_2}) \leq 2 \Big]\\
& = \mathbb{E}_u \Big[ \mathbf{1}_{\{1\leq \omega(L_{D_1,D_2}) \leq 2\}} \mathbb{E}_u \big[ A_{n-1} \big(x_1, \mathbf{1}_{L_{D_1} \setminus L_{D_2}} \cdot \omega + \mathbf{1}_{L_{D_1,D_2}} \cdot \omega\big) \big| \mathbf{1}_{L_{D_1,D_2}} \cdot \omega \big] \Big]\\
& \leq \sup_{l, l' \in \mathbb{L}} \mathbb{E}_u \big[ A_{n-1}(x_1, \omega + \delta_l + \delta_{l'}) \big] \cdot \mathbb{P}_u \big[ 1\leq \omega(L_{D_1,D_2}) \leq 2 \big] \leq c (a^{1-\gamma}_{n-1})^2 q_{n-1},
\end{split}
\end{equation}
where the last inequality follows from the definition of $q_n$, together with \eqref{e:mu2} and \eqref{e:muL}. Putting together \eqref{e:EAAd1}, \eqref{e:EAAd2} and \eqref{e:EAAd3} we obtain \eqref{e:EAAd} as promised.

To finish the proof, we show that for every $x_1 \in \mathcal{H}^{i_1}_n$ and $x_2 \in \mathcal{H}^{i_2}_n$,
\begin{equation}
\label{e:EAdAd}
\mathbb{E}_u \big[ A_{n-1} (x_1, \omega + \delta_{l_1} + \delta_{l_2}) A_{n-1} (x_2, \omega + \delta_{l_1} + \delta_{l_2}) \big] \leq c [q^2_{n-1} + (a^{1-\gamma}_{n-1})^2], \text{ for all $n\geq 1$,}
\end{equation}
which will correspond to the last term in the right-hand side of \eqref{e:q_rec}.

To prove the above, we first proceed as in \eqref{e:EAAd1} to obtain that
\begin{equation}
\label{e:EAdAd1}
\mathbb{E}_u \big[ A_{n-1} (x_1, \omega + \delta_{l_1} + \delta_{l_2}) A_{n-1} (x_2, \omega + \delta_{l_1} + \delta_{l_2}), \omega(L_{D_1,D_2}) = 0 \big] \leq q^2_{n-1}.
\end{equation}
Then we use \eqref{e:mu2} and \eqref{e:muL} to get
\begin{equation}
\label{e:EAdAd2}
\begin{split}
\mathbb{E}_u & \big[ A_{n-1} (x_1, \omega + \delta_{l_1} + \delta_{l_2}) A_{n-1} (x_2, \omega + \delta_{l_1} + \delta_{l_2}), \omega(L_{D_1,D_2}) \geq 1 \big]\\
& \leq \mathbb{P}_u [\omega(L_{D_1,D_2}) \geq 1] \leq c(a^{1-\gamma}_{n-1})^2.
\end{split}
\end{equation}
Together with \eqref{e:EAdAd1} and \eqref{e:EAdAd2}, this yields \eqref{e:EAdAd}.

The bounds \eqref{e:EAAd} and \eqref{e:EAdAd} can be plugged into \eqref{e:Adelta} (see also the first paragraph of this proof) and, after taking the supremum over $x_0 \in \hex$ and $l_1, l_2 \in \mathbb{L}$, we obtain \eqref{e:q_rec}. This finishes the proof of Lemma~\ref{l:q_rec}.
\end{proof}

Now that we have established a system of relations between $p_n(u)$ and $q_n(u)$, we can obtain the promised induction step used to bound these probabilities. From now on, we finally fix the scale parameter
\begin{equation}
\label{e:a0choice}
\hat{a}_0 = 288^6 \vee \big(8 \; (\useconstant{prec} \vee \useconstant{qrec}) \big)^{168},
\end{equation}
we reefer to Lemmas~\ref{l:p_rec} and \ref{l:q_rec} for the definitions of these constants. As we stressed then, these constants do not depend on the value of $a_0 \in [288^6, \infty)$.

\begin{proposition}
\label{prop:pnpn}
\newconstant{induc}
Take $\hat{a}_0$ as in \eqref{e:a0choice} and define the corresponding sequence $(\hat{a}_n)_{n\geq 1}$ through \eqref{e:gamma}. Then, if for some $n \geq 1$,
\begin{equation}
\label{e:indhyp}
p_{n-1} \leq \hat{a}_{n-1}^{(5/2) (1-\gamma)} \quad \text{and} \quad q_{n-1} \leq \hat{a}_{n-1}^{(3/2) (1-\gamma)},
\end{equation}
then this also holds with $n-1$ replaced by $n$, i.e.
\begin{equation}
p_{n} \leq \hat{a}_{n}^{(5/2) (1-\gamma)} \quad \text{and} \quad q_{n} \leq \hat{a}_{n}^{(3/2) (1-\gamma)}.
\end{equation}
\end{proposition}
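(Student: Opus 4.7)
The statement is purely an induction step: assuming the bounds \eqref{e:indhyp} on $p_{n-1}$ and $q_{n-1}$, one plugs them into the recurrences of Lemmas~\ref{l:p_rec} and~\ref{l:q_rec}, and verifies that the choice of $\hat a_0$ absorbs every numerical constant. The key observation to keep in mind throughout is that $1-\gamma=-1/6<0$, so among powers $\hat a_{n-1}^{k(1-\gamma)}$ the \emph{smallest} exponent $k$ produces the \emph{largest} value; consequently in each sum I only need to keep the term with the smallest such $k$.

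\emph{Step 1: Control of $p_n$.} I plug the induction hypothesis into \eqref{e:p_rec}. The three terms inside the bracket become, respectively, bounded by $\hat a_{n-1}^{5(1-\gamma)}$, $\hat a_{n-1}^{6(1-\gamma)}$ and $\hat a_{n-1}^{5(1-\gamma)}$. Since $5<6$ and $1-\gamma<0$, the dominant contribution is $\hat a_{n-1}^{5(1-\gamma)}$, giving
\begin{equation*}
p_n \;\leq\; 3\,\useconstant{prec}\,\hat a_{n-1}^{2(\gamma-1)+5(1-\gamma)} \;=\; 3\,\useconstant{prec}\,\hat a_{n-1}^{3(1-\gamma)}.
\end{equation*}
The target bound is $\hat a_n^{(5/2)(1-\gamma)}=\hat a_{n-1}^{(35/12)(1-\gamma)}$. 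Since $3>35/12$ and $1-\gamma<0$, the target has a larger exponent; the two exponents differ by $(35/12-3)(1-\gamma)=1/72$, so the step reduces to checking $3\useconstant{prec}\le \hat a_{n-1}^{1/72}$, which is ensured by the choice \eqref{e:a0choice} of $\hat a_0$ (since $\hat a_{n-1}\ge \hat a_0\ge (8\useconstant{prec})^{168}$, and $168/72>1$).

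\emph{Step 2: Control of $q_n$.} I now plug the hypothesis into the three groups of terms in \eqref{e:q_rec}. The first group is identical to the bracket controlling $p_n$, so it is at most $3\useconstant{qrec}\,\hat a_{n-1}^{3(1-\gamma)}$. In the second group, multiplied by the prefactor $\hat a_{n-1}^{\gamma-1}$, the three summands contribute exponents $4(1-\gamma)$, $(7/2)(1-\gamma)$ and $6(1-\gamma)$; the dominant one is $(7/2)(1-\gamma)$, and after the prefactor the total is $3\useconstant{qrec}\,\hat a_{n-1}^{(5/2)(1-\gamma)}$. In the third group the summands give $3(1-\gamma)$ and $2(1-\gamma)$, so the dominant one is $2(1-\gamma)$, yielding $2\useconstant{qrec}\,\hat a_{n-1}^{2(1-\gamma)}$. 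Adding the three contributions, the overall dominant exponent is $2(1-\gamma)$, so
\begin{equation*}
q_n \;\leq\; 8\,\useconstant{qrec}\,\hat a_{n-1}^{2(1-\gamma)}.
\end{equation*}
The target is $\hat a_n^{(3/2)(1-\gamma)}=\hat a_{n-1}^{(7/4)(1-\gamma)}$, and since $2>7/4$ with $1-\gamma<0$, the deficit is $(2-7/4)(1-\gamma)\cdot(-1)=1/24$. The step reduces to $8\useconstant{qrec}\le \hat a_{n-1}^{1/24}$, again guaranteed by \eqref{e:a0choice}, since $168/24=7\ge 1$.

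\emph{Main obstacle.} There is no conceptual difficulty beyond this point; the only subtle aspect is the careful bookkeeping of which exponent dominates in each of the several sums, made easier by the convention that $1-\gamma$ has been chosen negative so that the comparison among exponents is just numerical. The exponents in \eqref{e:indhyp} have in fact been tuned precisely so that both resulting inequalities hold with a small positive margin ($1/72$ and $1/24$), leaving room for the multiplicative constants $\useconstant{prec}$ and $\useconstant{qrec}$ to be absorbed into a fixed power of $\hat a_0$.
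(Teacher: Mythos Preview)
Your proof is correct and follows essentially the same route as the paper: plug the induction hypothesis into the recurrences of Lemmas~\ref{l:p_rec} and~\ref{l:q_rec}, keep the dominant exponent in each bracket, arrive at $p_n\le 8\useconstant{prec}\,\hat a_{n-1}^{3(1-\gamma)}$ and $q_n\le 8\useconstant{qrec}\,\hat a_{n-1}^{2(1-\gamma)}$, and then absorb the constants using $\hat a_0\ge (8(\useconstant{prec}\vee\useconstant{qrec}))^{168}$. The only cosmetic difference is that the paper rewrites $\hat a_{n-1}^{k(1-\gamma)}$ as $\hat a_n^{k(1/\gamma-1)}$ before comparing to the target, whereas you compare directly in terms of $\hat a_{n-1}$; the resulting margins ($1/72$ and $1/24$ in your version) are equivalent to the paper's checks that $3(1/\gamma-1)+1/168<(5/2)(1-\gamma)$ and $2(1/\gamma-1)+1/168<(3/2)(1-\gamma)$.
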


\begin{proof}
Using Lemmas~\ref{l:p_rec} and \ref{l:q_rec} together with \eqref{e:indhyp}, we obtain that
\begin{equation}
\begin{split}
p_n & \leq \useconstant{prec} \hat{a}_{n-1}^{2(\gamma-1)} \big[ \hat{a}_{n-1}^{5(1-\gamma)} + \hat{a}_{n-1}^{6(1-\gamma)} + \hat{a}_{n-1}^{2(1-\gamma)}\hat{a}_{n-1}^{3(1-\gamma)} \big] \leq 8 \; \useconstant{prec} \hat{a}_{n-1}^{3(1-\gamma)}\\
q_n & \leq \useconstant{qrec} \hat{a}_{n-1}^{2(\gamma-1)} \big[ \hat{a}_{n-1}^{5(1-\gamma)} + \hat{a}_{n-1}^{6(1-\gamma)} + \hat{a}_{n-1}^{2(1-\gamma)}\hat{a}_{n-1}^{3(1-\gamma)} \big] \\
& \quad + \useconstant{qrec} \hat{a}_{n-1}^{(\gamma-1)} \big[ \hat{a}_{n-1}^{4(1-\gamma)} + \hat{a}_{n-1}^{(7/2)(1-\gamma)} + \hat{a}_{n-1}^{6(1-\gamma)} \big] + \useconstant{qrec} \big[ \hat{a}_{n-1}^{3(1-\gamma)} + \hat{a}_{n-1}^{2(1-\gamma)}\big]\\
& \leq 8 \; \useconstant{qrec} \hat{a}_{n-1}^{2(1-\gamma)}.
\end{split}
\end{equation}

Using \eqref{e:a0choice}, we obtain that
\begin{equation}
\label{e:pnqnan}
\begin{split}
p_n & \leq \hat{a}_0^{1/168} \cdot \hat{a}_{n-1}^{3(1-\gamma)} \overset{\eqref{e:an_ratio}}{=} \hat{a}_0^{1/168} \cdot \hat{a}_{n}^{3(\frac{1}{\gamma} -1)} \leq \hat{a}_n^{3(\frac{1}{\gamma} - 1) + 1/168}\\
q_n & \leq \hat{a}_0^{1/168} \cdot \hat{a}_{n-1}^{2(1-\gamma)} \leq \hat{a}_n^{2(\frac{1}{\gamma} - 1) + 1/168}.\\
\end{split}
\end{equation}

But we have chosen $\gamma$ to be $7/6$, so that
\begin{equation}
3\Big(\frac1\gamma - 1\Big) + \frac{1}{168} < \frac52 (1-\gamma) \quad \text{and} \quad 2\Big(\frac1\gamma - 1\Big) + \frac{1}{168} < \frac32 (1-\gamma),
\end{equation}
which together with \eqref{e:pnqnan} yields the Lemma.
\end{proof}

It is clear from Proposition~\ref{prop:pnpn} that all we need to do in order to obtain the decay of $p_n$ is to bound the values of $p_0(u)$ and $q_0(u)$. This will be done in the next section.

\section{Triggering the recurrence relation}
\label{s:trigger}

In this section we are going to show that for $u$ small enough we have that $p_0 (u) \leq \hat{a}_0^{(5/2) (1-\gamma)}$ and $q_0 (u) \leq \hat{a}_0^{(3/2) (1-\gamma)}$, allowing us to trigger the chain of inequalities provided by Lemma~\ref{prop:pnpn}.
This result will follow once we prove the following

\begin{proposition}
\label{prop:pq_tozero}
As $u$ goes to zero, both $p_0(u)$ and $q_0(u)$ vanish.
\end{proposition}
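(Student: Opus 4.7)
The plan is to reduce both statements to the fact that, as $u \to 0$, the Poisson probability that any cylinder of $\omega$ touches the compact slab $D := S(x,\hat a_0) \times [0,1000]$ tends to zero, and to combine this with the roughness of $H$ in order to rule out that the two deterministic cylinders appearing in the definition of $q_0$ could by themselves produce a crossing of the annulus $\overline{S(x,\hat a_0)} \setminus S(x,\hat a_0/10)$.

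For $p_0(u)$, if $A_0(x,\omega)=1$, I would truncate the witnessing path in $\pi(\mathcal{L}(\omega) \cap H)$ so that it remains in $\overline{S(x,\hat a_0)}$ and still reaches $\partial S(x,\hat a_0)$. Lifting it through $\pi^{-1}$ (a homeomorphism on $H$) produces a path on $H$ contained in $\mathcal{L}(\omega)\cap H \cap \overline{D}$, so in particular $\omega(L_{\overline{D}})\ge 1$. Therefore
\[
p_0(u) \;\le\; \mathbb{P}_u\big[\omega(L_{\overline{D}})\ge 1\big] \;=\; 1-e^{-u\,\mu(L_{\overline{D}})},
\]
and since $\mu(L_{\overline{D}})$ is a finite constant depending only on $\hat a_0$ (either by Lemma~\ref{l:muL} or by a direct application of Lemma~2.2 of \cite{TW10b}), the right-hand side vanishes with $u$. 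Translation invariance of $\mathbb{P}_u$ and $\mu$ makes the bound uniform in $x \in \hex$.

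For $q_0(u)$, I would split on whether $\omega(L_{\overline{D}}) \ge 1$ or not. On the former event the bound above applies. On the event $\{\omega(L_{\overline{D}})=0\}$, no random cylinder contributes to $\mathcal{L}(\omega)\cap H$ inside $\overline{D}$, so the truncated witnessing path for $A_0(x, \omega+\delta_{l_1}+\delta_{l_2})=1$ would lie entirely in $\pi\big((C(l_1)\cup C(l_2))\cap H\big) \cap \overline{S(x,\hat a_0)}$. I would then invoke the roughness property \eqref{e:smallpieces} of $H$, together with the deliberate choice of height $1000$ and the lower bound $\hat a_0 \ge 288^6$, to conclude that the contribution of at most two cylinders to $H$ cannot produce a connected subset of diameter as large as $9\hat a_0/10$. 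Consequently the indicator is zero on this event, and $q_0(u) \le \mathbb{P}_u[\omega(L_{\overline{D}})\ge 1] \to 0$ uniformly in $x,l_1,l_2$.

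The crux, and essentially the only non-trivial step, is the deterministic geometric claim inside the $q_0$ case: for every pair of lines $l_1,l_2 \in \mathbb{L}$, the set $\pi\big((C(l_1)\cup C(l_2))\cap H\big)$ contains no connected component crossing the annulus of width $9\hat a_0/10$. This is exactly where the roughness of $H$ is essential and why the specific parameters (the height $1000$ and the base scale $\hat a_0$) matter; everything probabilistic reduces to the elementary Poisson estimate $1-e^{-u\mu(L_{\overline{D}})}\to 0$ as $u \to 0$.
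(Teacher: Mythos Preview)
Your reduction is exactly the one the paper uses: both $p_0(u)$ and $q_0(u)$ are bounded by $\mathbb{P}_u[\omega(L_{\overline D})\ge 1]$ once one knows the deterministic claim that two cylinders alone cannot produce a crossing of the annulus in $\pi(\cdot \cap H)$. So the probabilistic part of your outline is fine and matches the paper.

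The gap is that you treat \eqref{e:smallpieces} as something to be ``invoked'', but in the paper this equation is not an earlier result: it is precisely the statement that has to be \emph{proved} inside the proof of Proposition~\ref{prop:pq_tozero}. You yourself flag it as ``the crux, and essentially the only non-trivial step'', but you give no argument for it. Without it, the proposal is circular for the $q_0$ part.

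What the paper actually does to establish \eqref{e:smallpieces} is not a one-line appeal to ``roughness'': it first proves a separate geometric lemma (Lemma~\ref{l:two_to_one_cyl}) saying that any curve in $C_1\cup C_2$ with endpoints at distance $\ge \hat a_0/10$ contains a subarc of diameter $\ge \hat a_0/100$ lying inside a single cylinder $B(l,4)$ of radius $4$. It then argues that such a subarc, being on $H$, forces (via the Lipschitz map $x\mapsto(\pi(x),\dist(\pi(x),\mathcal H))$) the line $l$ to stay within height $\le 10$ over a stretch of length $\ge 10^6$, hence its projection is a line segment of that length contained in $B(\mathcal H,20)$. This is ruled out by the finite-horizon property of the hexagonal lattice $\mathcal H$ (the Lorentz gas bound $\le 10^4$). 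These two ingredients --- the two-to-one-cylinder reduction and the finite-horizon argument --- are the missing content of your proposal; the numerical choices $1000$ and $\hat a_0\ge 288^6$ enter exactly here, in the inequalities of that computation.
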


\begin{remark}
\label{r:diffHR2}
Before going into the proof of this lemma, we would like to stress that this is the first part of the proof of Theorem~\ref{t:main} where our specific choice of the surface $H$ will play a role.
If we had chosen $H$ to be equal to $\mathbb{R}^2$, all the considerations of previous sections would still hold true.
However we will strongly use the rough shape of $H$ in establishing that $q_n(u)$ goes to zero with $u$ (which would not be the case for instance for $\mathbb{R}^2$).
\end{remark}

Recall that the quantity $q_0$ involves the addition of two cylinders $C(l_1)$ and $C(l_2)$ to the random set of cylinders $\mathcal{L}(\omega)$.
In order to easy our analysis we start by proving a lemma which reduces questions concerning two cylinders to a question related to a single (thicker) one.
As before, $\mathbb{C}$ stands for the set of all cylinders of radius one, \textit{i{.}e{.}}, $\mathbb{C}=\{C(l);~l \in \mathbb{L}\}$.

\begin{lemma}
 \label{l:two_to_one_cyl}
Given any two cylinders $C_1, C_2, \in \mathbb{C}$, and any curve $\eta:[0,1] \rightarrow C_1 \cup C_2$, such that dist$(\eta(0), \eta(1)) \geq a_0/10$ we can find a line $l \in \mathbb{L}$ and $t_1 < t_2 \in [0,1]$ such that:
\begin{equation}
\label{e:dist_eta}
\text{dist} (\eta(t_1), \eta(t_2)) \geq a_0/100
\end{equation}
\begin{equation}
\label{e:trace_eta}
 \eta([t_1, t_2]) \subset B(l,4)
\end{equation}
\end{lemma}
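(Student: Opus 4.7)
The plan is a case analysis based on the relative position of the axes $l_1,l_2$ of $C_1=C(l_1)$ and $C_2=C(l_2)$, producing in each case either a covering of the whole curve by a tube of radius $4$, or a sub-arc contained in one of the cylinders (hence automatically in $B(l_i,4)$, since $C_i=B(l_i,1)\subset B(l_i,4)$).

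First I would dispose of the two easy configurations. If $\dist(l_1,l_2)>2$, then $C_1\cap C_2=\varnothing$; since $\eta([0,1])$ is a connected subset of the disjoint closed union $C_1\sqcup C_2$, it lies entirely in one $C_i$, and I would take $l=l_i$, $t_1=0$, $t_2=1$. If instead $l_1\parallel l_2$ and $\dist(l_1,l_2)\le 3$, then for any $p\in C_2$, $d(p,l_1)\le d(p,l_2)+\dist(l_1,l_2)\le 4$, so $C_1\cup C_2\subset B(l_1,4)$, and again $l=l_1$, $t_1=0$, $t_2=1$ works.

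The substantive case is when $l_1\not\parallel l_2$ and $\dist(l_1,l_2)\le 2$. Here I would aim to show that some sub-arc $\eta([t_1,t_2])$ lies entirely inside one cylinder $C_i$ and satisfies $\dist(\eta(t_1),\eta(t_2))\ge a_0/100$; then taking $l=l_i$ closes the proof. To produce such a sub-arc I would decompose $[0,1]$ into maximal ``legs'' on which $\eta$ stays in a single cylinder. Any transition between the two cylinders forces $\eta$ to pass through the intersection $C_1\cap C_2$, which is bounded thanks to non-parallelism of the axes. Assuming for contradiction that every leg has image-diameter strictly less than $a_0/100$, a telescoping triangle-inequality argument, using that consecutive switching points lie in the bounded set $C_1\cap C_2$, would bound $\diam\eta([0,1])$ in terms of $\diam(C_1\cap C_2)+a_0/100$, contradicting $\dist(\eta(0),\eta(1))\ge a_0/10$.

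The main obstacle is the small-angle sub-regime of the last case, where $\diam(C_1\cap C_2)$ can be of order $1/\sin\theta$ and hence comparable to or larger than $a_0/10$, so the naive counting argument above breaks down. Here I would exploit that $C_1\cap C_2\subset C_1\subset B(l_1,4)$ holds trivially, so any sub-arc of $\eta$ that remains inside the (possibly extended) junction already fits in $B(l_1,4)$; combined with the observation that when the angle between $l_1$ and $l_2$ is small the union $C_1\cup C_2$ locally resembles a single thicker tube around $l_1$, one can identify a sub-arc of $\eta$ of endpoint-distance at least $a_0/100$ inside $B(l_1,4)$. Verifying that the choice $a_0\ge 288^6$ absorbs the universal geometric constants appearing in these estimates is the delicate bookkeeping step.
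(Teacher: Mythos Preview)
Your easy cases (disjoint cylinders, parallel axes) match the paper, and your identification of the small-angle sub-regime as the real obstacle is correct. However, the resolution you sketch for that sub-regime is where the gap lies.

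The ``legs'' decomposition and the telescoping bound only work when $\diam(C_1\cap C_2)$ is much smaller than $a_0/10$; you acknowledge this. But your proposed fix---``$C_1\cap C_2\subset B(l_1,4)$, and when the angle is small $C_1\cup C_2$ locally resembles a single thicker tube around $l_1$, so one can identify a sub-arc of endpoint-distance at least $a_0/100$ inside $B(l_1,4)$''---does not constitute an argument. The set $C_1\cap C_2$ is indeed in $B(l_1,1)$, but a leg in $C_2$ can exit $C_1\cap C_2$ while staying in $C_2\setminus C_1$, and then it is \emph{not} in $B(l_1,4)$. So neither your legs nor the raw intersection $C_1\cap C_2$ give you a region that is simultaneously (i) contained in some $B(l,4)$ and (ii) guaranteed to trap a long connected sub-arc of $\eta$. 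The phrase ``locally resembles a thicker tube'' is not made precise, and there is no universal constant here for $a_0$ to absorb: $\diam(C_1\cap C_2)$ is genuinely unbounded as the angle shrinks.

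The paper supplies exactly the missing construction. It defines a \emph{junction region} $\bar C_1\cup\bar C_2$, where $\bar C_i$ is the slice of $C_i$ between the two planes orthogonal to $l_i$ through the endpoints $x_i,y_i$ of the projection of $C_1\cap C_2$ onto $l_i$. A short Hausdorff-distance computation shows $\bar C_1\cup\bar C_2\subset B([x_1,y_1],4)\subset B(l_1,4)$. The paper then removes two small buffer balls $B(x_1,4)\cup B(y_1,4)$ (of fixed radius, independent of the angle) and, by a first/last hitting-time argument, extracts a sub-arc $\eta([t_1,t_2])$ avoiding these balls with $\dist(\eta(t_1),\eta(t_2))\ge a_0/100$. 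The point of the buffers is that they contain the four bounding discs of $\bar C_1$ and $\bar C_2$, so the extracted sub-arc, being connected, is forced to lie either entirely inside $\bar C_1\cup\bar C_2$ (hence in $B(l_1,4)$) or entirely outside it (hence in a single $C_i$). This dichotomy is the idea your sketch is missing.
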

\begin{proof}
Let $U = \eta([0,1])$ be the image of the curve $\eta$.

We start by treating two simple cases, namely the one in which $C_1 \cap C_2 = \emptyset$ and the other in which $C_1 \cap C_2 \neq \emptyset$ but $C_1$ and $C_2$ have their axis parallel to each other.
In the former we have that either $U \subset C_1$ or $U \subset C_2$ so there is nothing to be proved and in the latter we can take $l$ to be the axis of $C_1$, and conclude the proof of the lemma.

So, from now on, we assume that $C_1$ and $C_2$ are not parallel and intersect each other, which implies that:
\begin{equation}
 \label{e:C1_C2_nonempty}
C_1 \cap C_2 \text{ is non-empty and bounded.}
\end{equation}

For $x \in \mathbb{R}^3$ and $i=1,2$, we define $\varphi_i(x)$ as the closest point to $x$ lying in the axis of $C_i$.
Note that $\varphi_i ^{-1}(y)$ (for $y$ in the axis of $C_i$) is a plane perpendicular to $C_i$.
From (\ref{e:C1_C2_nonempty}) and the fact that $C_1 \cap C_2$ is convex we conclude that $\varphi_i (C_1 \cap C_2)$ is a line segment whose end-points are denoted by $x_i$ and $y_i$.
Since $U$ is connected and diam$(U) \geq a_0/10$ we have that the set $U \backslash(B(x_1,4) \cup B(y_1,4))$ is non-empty.
Moreover there exists $t_1 < t_2 \in [0,1]$ such that:
\begin{equation}
 \label{e:eta_rest}
\eta([t_1,t_2]) \subseteq U \backslash (B(x_1,4) \cup B(y_1,4)) \text{ and \eqref{e:dist_eta} holds.}
\end{equation}
Indeed, assume that $B(x_1,5)$ is hit by $\eta$ before $B(y_1,5)$ (all the other cases are analogous), then consider the times:
\begin{equation*}
\begin{array}{c}
s_1 \text{ is the time when $\eta$ first enters the ball $B(x_1,5)$},\\
s_2 \text{ is the time of the last visit to $B(x_1,5)$ before visiting $B(y_1,5)$},\\
s_3 \text{ is the time when $\eta$ first enters $B(y_1,5)$} \text{ and}\\  
s_4 \text{ is the  time of the last visit to $B(y_1,5)$}.
\end{array}
\end{equation*}
Then, using the triangle inequality one can see that one of the pairs $(0,s_1)$, $(s_2, s_3)$ or $(s_4, 1)$ satisfy \eqref{e:eta_rest} and \eqref{e:dist_eta}.

Let us denote $U' = \eta([t_1,t_2])$.
Since (\ref{e:dist_eta}) has already been established, all we need to prove is that
\begin{equation}
 \label{e:cyl_radius4}
U' \text{ is contained in a cylinder of radius 4.}
\end{equation}

Let $[x_i,y_i]$ stand for the line segment determined by $x_i$ and $y_i$ and denote by $\bar{C}_i$ the set $C_i \cap \varphi_i^{-1}([x_i,y_i])$.
We now show (\ref{e:cyl_radius4}) by considering the following cases:%\newline

\vspace{4mm}

\noindent \emph{Case 1:} $U' \cap (\bar{C}_1 \cup \bar{C}_2) = \emptyset$.

In this case, since $C_1 \cap C_2 \subset \bar{C}_1 \cup \bar{C}_2$ and $U'$ is connected and contained in $C_1 \cup C_2$, we conclude that $U'$ must be entirely contained in one of the two cylinders $C_1$ or $C_2$.
This proves (\ref{e:cyl_radius4}) in this case.

\vspace{4mm}

\noindent \emph{Case 2:} $U' \cap (\bar{C}_1 \cup \bar{C}_2) \neq \emptyset$.

In this case we first show that
\begin{equation}
 \label{e:U'C1C2}
U' \subseteq \bar{C_1} \cup \bar{C}_2
\end{equation}
We start by claiming that
\begin{equation}
 \label{e:Haus1}
\text{Haus}([x_1,y_1],[x_2,y_2]) \leq 2
\end{equation}
where Haus$(\cdot,\cdot)$ stands for the Hausdorff distance between two sets.
Indeed, given $x \in [x_1, y_1]$ (the case $x \in [x_2, y_2]$ is analogous) there is some $z \in C_1 \cap C_2$ such that $\varphi_1(z) = x$.
Then it follows that
\begin{equation*}
\text{dist}(x, [x_{2}, y_{2}]) \leq \text{dist}(x, \varphi_{2}(z)) \leq \text{dist}(x,z) + \text{dist}(z, \varphi_{2}(z)) \leq 2,
\end{equation*}
establishing (\ref{e:Haus1}).

In addition we also claim that
\begin{equation}
 \label{e:Haus2}
\text{Haus}(\{x_1,y_1\},\{x_2,y_2\}) \leq 2 \sqrt{2}.
\end{equation}

In order to prove that, suppose by contradiction that, say, dist$(x_2, \{x_1, y_1\}) > 2 \sqrt{2}$ (the other cases are analogous).
Then we would obtain by (\ref{e:Haus1}), together with Pythagoras' theorem, that $d(\varphi_1(x_2), \{x_1,y_1\}) > 2$.
Since the segment $[x_2,y_2]$ has to be contained in one of the closed half-spaces determined by $\varphi^{-1}_1(\varphi_1(x_2))$, we would have that either dist$(x_1, [x_2,y_2])$ or dist$(y_1, [x_2,y_2])$ would be strictly bigger than $2$, contradicting then (\ref{e:Haus1}).
This establishes the bound (\ref{e:Haus2}).

Recall that $U'$ is connected, contained in $C_1 \cup C_2$ and it intersects $\bar{C}_1 \cup \bar{C}_2$.
Then either \eqref{e:U'C1C2} holds or $U'$ intersects one of the discs $\varphi^{-1}_1 (x_1) \cap C_1$, $\varphi^{-1}_1 (y_1) \cap C_1$, $\varphi^{-1}_2 (x_2) \cap C_1$, and $\varphi^{-1}_2 (y_2) \cap C_2$.
However $U'$ cannot intersect any of those discs since it is, by definition, disjoint from the balls $B(x_1,4) \cup B(y_1,4)$, which, by (\ref{e:Haus2}) contains all these four discs.
It follows that (\ref{e:U'C1C2}) holds.
Since, by (\ref{e:Haus1}) we have that $\bar{C}_1 \cup \bar{C}_2 \subset B([x_1,y_1],4)$ we have that $U'$ is contained in the cylinder of radius $4$ and having the same axis as $C_1$.
This finishes the proof of (\ref{e:cyl_radius4}) in \emph{Case 2} yielding thus the lemma.
\end{proof}

\begin{proof}[Proof of Proposition  \ref{prop:pq_tozero}]
For any $x_0 \in \hex$, we have that $\mu(L_{S(x_0,\hat{a}_0)\times[0,1000]}) < \infty$.
Therefore, as $u$ goes to zero, $\sup_{x_0 \in \hex} \mathbb{P}_u [\omega(L_{S(x_0, \hat{a}_0)\times [0,1000]})>1]$ vanishes.
In particular this already gives that $\lim_{u \to 0} p_0(u) = 0$.
Moreover it also implies that, in order to prove that $\lim_{u \to 0} q_0(u) = 0$, all we have to do is to show that,
\begin{equation}
\label{e:smallpieces}
\begin{array}{c}
\text{for all possible choices of $C_1$ and $C_2$ in $\mathbb{C}$ and $x_0 \in \hex$,}\\
S(x_0, \hat{a}_0/10) \text{ is not connected to } \partial{S}(x_0,\hat{a}_0) \text{ through } \pi((C_1 \cup C_2) \cap H).
\end{array}
\end{equation}
For proving this, it will be convenient to use Lemma \ref{l:two_to_one_cyl}, as we show below.

Assume by contradiction that, for some $x_0 \in \hex$ and cylinders $C_1, C_2 \in \mathbb{C}$, there is a path in $\pi((C_1 \cup C_2)\cap H)$ connecting $S(x_0, \hat{a}_0/10)$ to $\partial S(x_0, \hat{a}_0)$.
This would imply that there would exist some curve $\eta : [0,1] \to (C_1 \cup C_2) \cap H$ for which dist$(\eta(0),\eta(1)) \geq \hat{a}_0/10$.
Using Lemma \ref{l:two_to_one_cyl} we obtain $t_1 < t_2 \in [0,1]$ such that dist$(\eta(t_1),\eta(t_2))\geq \hat{a}_0/100$ and $\eta ([t_1, t_2])$ is contained in $B(l,4)$ for some $l \in \mathbb{L}$.
Denoting again by $\varphi(x)$ the closest point to $x$ in $l$ and using Pythagoras' Theorem we conclude that
\begin{equation}
 \label{e:dpiphieta}
\begin{split}
  \text{dist}(\pi \circ \varphi \circ \eta(t_1), \pi \circ \varphi \circ \eta(t_2)) & \geq \text{dist}(\pi \circ \eta(t_1), \pi \circ \eta(t_2)) - 8 \geq \\
 & \geq \sqrt{\left(\hat{a}_0/100\right)^2 - 1000^2} - 8 \geq 10^7,
\end{split}
\end{equation}
where, in the third inequality, we have used that $\hat{a}_0 > 10^{10}$ which follows directly from  \eqref{e:a0choice}.

Let us denote $\zeta = \pi \circ \varphi \circ \eta$.
Note that $\zeta$ is a linear path defined in $[0,1]$ and taking values in $\mathbb{R}^2$.
By (\ref{e:dpiphieta}) we conclude that dist$(\zeta(t_1), \zeta(t_2)) \geq 10^7$, so we can find $t'_1 < t'_2$ such that the following holds
\begin{gather}
 \label{e:zeta1}
\zeta(t'_1) \text{ and } \zeta(t'_2) \text{ belong to } \mathcal{H},\\
 \label{e:zeta2}
\text{dist}(\zeta(t'_1, t'_2)) \geq 10^6 \text{ and}\\
 \label{e:zeta3}
\zeta([t'_1, t'_2]) \text{ is contained in the segment } [\zeta(t'_1),\zeta(t'_2)] \subset \mathbb{R}^2.
\end{gather}

We now analyze the image of the path $\zeta|_{[{t'_1},{t'_2}]}$ when it is pulled back to $H$.
For that, let us consider the function from $\mathbb{R}^3$ to itself given by $F(x) = (\pi(x), \text{dist}(\pi(x), \mathcal{H}))$ and note that
\begin{gather}
\label{e:propF1}
F \circ \eta = \eta \text{ and }\\
\label{e:propF}
F \text{ is a Lipschitz function with constant } \sqrt{2}.
\end{gather}

It follows that, for all $t \in [0,1]$
\begin{equation}
\begin{split}
 \text{dist}(F(\varphi \circ \eta(t)), \varphi \circ \eta(t)) & \overset{\eqref{e:propF1}}{\leq} \text{dist}(F(\varphi\circ\eta(t)), F(\eta(t))) + \text{dist}(\eta(t), \varphi \circ \eta(t)) \\
 & \overset{\eqref{e:propF}}{\leq} (\sqrt{2} + 1) \text{dist}(\eta(t), \varphi \circ \eta(t)) \leq 10.
\end{split}
\end{equation}

Together with \eqref{e:zeta1}, this inequality implies that $\varphi \circ \eta (t'_i)$, belong to the slab $\mathbb{R}^2 \times [0,10]$ for $i =1,2$.
Since $\varphi \circ \eta$ is contained in the line $l$, using \eqref{e:zeta3} we conclude that $\varphi \circ \eta (t)$ also belongs to the slab $\mathbb{R}^2 \times [0,10]$ for every $t \in [t'_1,t'_2]$.
But that would mean that, the line $\zeta([t'_1, t'_2])$ is contained in $B(\mathcal{H}, 20)$.
This together with \eqref{e:zeta2} leads to a contradiction to the fact that $\mathbb{R}^2 / B(\mathcal{H}, 20)$ is a Lorentz gas with horizon at most $10^4$ (see for instance \cite{Sza00}, p.335) so, $B(\mathcal{H}, 20)$ cannot contain a line segment with length greater than $10^4$.
\end{proof}

\section{Proof of Theorem~\ref{t:main}}
\label{s:main}

In this section we use Lemmas~\ref{prop:pnpn} and \ref{prop:pq_tozero} in order to obtain our main result.
We start by making some considerations concerning the ergodicity properties of the measure $\mathbb{P}_u$.

% Let $\Gamma \subset \mathbb{R}^3$ be a dense countable subgroup of $\mathbb{R}^3$ and $\Gamma_1 \subset \Gamma$ any other nonempty subgroup (for instance $\Gamma = \mathbb{A}^d$ and $\Gamma_1 = \mathbb{Q}e_1$).
Consider the space $\{0,1\}^{\mathbb{Q}^3}$ equipped with the canonical sigma-algebra $\mathcal{Y}$ generated by the coordinate projections $(Y_x)_{x \in \mathbb{Q}^3}$ and let $(t_x)_{x \in \mathbb{Q}^3}$ be the shift operator in $\{0,1\}^{\mathbb{Q}^3}$.
Denoting by $\mathcal{Q}_u$ the law of $\psi = \left( \mathbf{1}_{\{x \in \mathcal{V}\}} \right)_{x \in \mathbb{Q}^3}$ on $\{0,1\}^{\mathbb{Q}}$ and by $\Gamma$ the subgroup $2000\cdot\mathbb{Z} e_1$ then, for any event $A \in \mathcal{Y}$ we have (see (3{.}12) in Lemma 3{.}4 in \cite{TW10b})
\begin{equation}
\label{e:ergodic}
\text{if } t_x(A) = A \text{ for all } x \in \Gamma \text{, then } \mathcal{Q}_u [A] \in \{0,1\}.
\end{equation}
 This is to say that $(\mathbb{P}_u \circ \psi^{-1}, (t_x)_{x \Gamma})$ is ergodic.

Let Perc be the event $\{\mathcal{V} \text{ has an unbounded connected component} \}$ and Perc$_H$ the corresponding event replacing $\mathcal{V}$ for $\mathcal{V} \cap H$.
% Let us now fix
% \begin{equation}
%  \label{e:gamma}
% \Gamma = \{ (n + m e^{i \pi/3}, l) \in \mathbb{R}^3;~~ n,m,l \in \mathbb{Q}\} ~ \text{    and    } ~\Gamma_1 = 2000\cdot\mathbb{Z} \subset \Gamma.
% \end{equation}
Note that both $\mathbf{1}_{\text{Perc}} \circ \psi^{-1}$ and $\mathbf{1}_{\text{Perc}_H}\circ\psi^{-1}$ are invariant under $(t_x)_{x \in \Gamma}$.
Furthermore, in view of \eqref{e:ergodic} a simple modification in Proposition 3{.}5 in \cite{TW10b} gives that
\begin{equation}
\label{e:PuPerc}
\mathbb{P}_u [\text{Perc}] \in \{0,1\} ~ \text{   and   } ~ \mathbb{P}_u[\text{Perc}_H] \in \{0,1\}
\end{equation}

\begin{remark}
In \cite{TW10b} the authors state the same result corresponding to \eqref{e:ergodic} with $\Gamma = 2000\cdot\mathbb{Z} e_1$ replaced by $\mathbb{Q}{e_1}$, however no modification in their proof is required in order to obtain \eqref{e:ergodic}.
They also prove a result similar to \eqref{e:PuPerc} with the event Perc$_H$ replaced by the event Perc$_2$ where $\mathcal{V}\cap\mathbb{R}^2$ appear instead of $\mathcal{V}\cap H$.
Since the surface $H$ is invariant under $t_x$ for $x\in\Gamma$ the proof that they present adapts easily to our context.
\end{remark}

\begin{proof}[Proof of Theorem~\ref{t:main}]
By Lemma~\ref{prop:pq_tozero}, there is some constant \newconstant{usmall} $\useconstant{usmall} > 0$ such that for all $u \in [0,\useconstant{usmall})$
\begin{equation}
\label{e:p0q0}
p_0(u) \leq \hat{a}_0^{(5/2)(1-\gamma)} \quad \text{and} \quad q_0(u) \leq \hat{a}_0^{(3/2)(1-\gamma)}.
\end{equation}
Then, using Lemma~\ref{prop:pq_tozero} we obtain that for such values of $u$,
\begin{equation}
\label{e:pnbound}
p_n(u) \leq \hat{a}_n^{(5/2)(1-\gamma)}.
\end{equation}
We are now going to see how this implies that
\begin{equation}
\label{e:perc}
\mathbb{P}_u[\mathcal{V} \cap H \text{ has an unbounded connected component}] = 1.
\end{equation}
As above, let the event appearing in \eqref{e:perc} be denoted by $\text{Perc}_H$.
By \eqref{e:PuPerc} it will be enough to prove that $\text{Perc}_H$ has positive probability under $\mathbb{P}_u$.

Let $\mathcal{C}$ be the component of $\mathcal{V}\cap H$ containing the point $(0,0,1000)$.
If the event $\text{Perc}_H^c$ occurs, then $\mathcal{C}$ is bounded.
By the local finiteness of $\omega$, we have that $\mathcal{C}$ is delimited by the intersection of $H$ with a finite number of cylinders in the support of $\omega$.

Note that the intersection of any cylinder with $H$ is a union of pieces of ellipsoids delimited by lines.
This implies that, if $\mathcal{C}$ is non-empty and bounded, then its boundary in $H$ is given by a closed and piecewise smooth curve $\sigma'$ surrounding the point $(0,0,1000)$ in $H$.
We denote by $\sigma$ the projection of the curve $\sigma'$ into $\mathbb{R}^2$ (under the orthogonal map $\pi$).
Clearly, $\sigma$ is a closed curve surrounding the origin in $\mathbb{R}^2$.
This proves that
\begin{equation}
\label{e:Percsigma}
\text{Perc}_H^c \subseteq \Big\{
\begin{array}{c}
\text{there is a closed curve $\sigma \subset \pi(\mathcal{L}\cap H)$}\\
\text{surrounding the origin in $\mathbb{R}^2$}
\end{array}\Big\}
\end{equation}

Let us define the following sequence of real numbers
\begin{equation}
\label{e:xki}
x_{k,i} = \hat{a}_{k-1} + \big(\tfrac{i-1}{10}\big) \hat{a}_{k-1}, \text{ for $k \geq 1$ and $i = 1, \dots, \Big\lceil 10 \big(\tfrac{\hat{a}_k}{\hat{a}_{k-1}} \big) \Big\rceil$}.
\end{equation}
Recall here that $(\hat{a}_n)_{n\geq 1}$ has been obtained in Lemma~\ref{prop:pnpn}.
Note that, using the notation $M_k = \big\lceil 10 \tfrac{\hat{a}_k}{\hat{a}_{k-1}}  \big\rceil$,
\begin{equation}
\label{e:coverray}
[\hat{a}_{k_0-1}, \infty)  \subseteq  \bigcup_{k = k_0}^{\infty} \bigcup_{i=1}^{M_k} S \big(x_{k,i}, \tfrac{\hat{a}_{k-1}}{10} \big), \text{ for every $k_0 \geq 1$.}
\end{equation}

From \eqref{e:Percsigma},
\begin{equation}
\label{e:sigmahit}
\begin{split}
\text{Perc}_H^c \cap \big\{ & \omega \in \Omega; \; \omega(L_{S(0,\hat{a}_{k_0 - 1}) \times [0,1000]}) = 0 \big\}\\
& \subseteq \Big\{
\begin{array}{c}
\text{there is a closed curve $\sigma \subseteq [\pi(\mathcal{L} \cap H)] \setminus S(0,\hat{a}_{k_0-1})$}\\
\text{surrounding the origin in $\mathbb{R}^2$}
\end{array}\Big\}\\
& \subseteq \bigcup_{k = k_0}^{\infty} \Big\{
\begin{array}{c}
\text{there is a closed curve $\sigma \subseteq [\pi(\mathcal{L} \cap H)] \setminus S(0,\hat{a}_{k_0-1})$}\\
\text{surrounding the origin in $\mathbb{R}^2$ and intersecting $[\hat{a}_{k-1}, \hat{a}_k] e_1$}
\end{array} \Big\}.
\end{split}
\end{equation}
Whenever a curve $\sigma$ intersects $[\hat{a}_{k-1}, \hat{a}_k] e_1$, it also intersects one of the balls $S(x_{k,i},\tfrac{\hat{a}_{k-1}}{10})$ for some $i \in 1, \dots, M_k$.
Moreover, such a curve must also intersect the sphere $\partial S(x_{k,i},{\hat{a}_{k-1}})$ in order to surround the origin.
In particular, the event $A_{k-1}(x_{k,i})$ occurs, so that
\begin{equation}
\label{e:Aks2}
\text{Perc}_H^c \cap \big\{ \omega \in \Omega; \; \omega(L_{S(0,\hat{a}_{k_0 - 1}) \times [0,1000]}) = 0 \big\} \subseteq \bigcup_{k = k_0}^{\infty} \bigcup_{i=1}^{M_k}A_{k-1}(x_{k,i}).
\end{equation}
So that
\begin{equation}
\label{e:Aks}
\begin{split}
\mathbb{P}_u \Big[\text{Perc}_H^c, \big\{ \omega & \in \Omega; \; \omega(L_{S(0,\hat{a}_{k_0 - 1}) \times [0,1000]}) = 0 \big\} \Big] \leq \sum_{k = k_0}^{\infty} \sum_{i=1}^{M_k}\mathbb{P}_u[A_{k-1}(x_{k,i})]\\
& \begin{array}{e}
& \overset{\eqref{e:pn}}{\leq} & \sum_{k = k_0}^\infty 20 \big(\tfrac{\hat{a}_{k}}{\hat{a}_{k-1}} \big) p_{k-1}(u) \overset{\eqref{e:pnbound}}{\leq} 20 \sum_{k = k_0}^\infty \hat{a}_{k-1}^{(\gamma-1)} \hat{a}_{k-1}^{(5/2)(1-\gamma)}\\
& = & 20 \sum_{k=k_0}^\infty \hat{a}_{k-1}^{(3/2)(1-\gamma)} = 20 \; \sum_{\mathclap{k=k_0-1}}^\infty \; \big(\hat{a}_0^{-\frac 14} \big)^{\gamma^k}.
\end{array}
\end{split}
\end{equation}
Now recall that
\begin{equation}
\label{e:noak}
\mathbb{P}_u [\omega (L_{S(\hat{a}_{k_0},0) \times [0,1000]}) > 0] \leq u \mu(L_{S(\hat{a}_{k_0},0) \times [0,1000]}).
\end{equation}
Putting \eqref{e:Aks} and \eqref{e:noak} together, we obtain
\begin{equation}
\mathbb{P}_u \big[\text{Perc}_H^c \big] \leq 20 \; \sum_{\mathclap{k=k_0-1}}^\infty \; \big(\hat{a}_0^{-\frac 14} \big)^{\gamma^k} + u \mu(L_{S(\hat{a}_{k_0},0) \times [0,1000]}), \text{ for every choice of $k_0 \geq 1$}.
\end{equation}

Finally, take $k_0$ large enough so that the first term in the right-hand side of the above equation is at most $1/3$ and $u \leq \useconstant{usmall}$ small enough so that the second term is also smaller than $1/3$.
This proves that $\text{Perc}_H$ has positive probability, concluding the proof of Theorem~\ref{t:main}.
\end{proof}

\begin{remark}
\label{r:open}
This paper leaves several questions untouched such as:

1) Is the infinite connected component of $\mathcal{V}$ unique? Note that this is not a direct consequence of the results in \cite{BK89} since the set $\mathcal{L}$ fails to satisfy the so-called finite energy property.
This can be seen from the fact that $\mathcal{L}$ has no bounded components.

2) It is not even clear that the number of infinite connected components of $\mathcal{V}$ belongs almost surely to $\{0,1,\infty\}$, which would be natural to expect from the ergodicity of $\mathcal{V}$, see Lemma~3.3 in \cite{TW10b}.
Note also that the set $\mathcal{V}$ (or a discretized version of it) is not `insertion tolerant' in the sense of \cite{LS99}, Definition~3.2, so that Corollary~3.8 in \cite{LS99} cannot be directly applied in this situation.

3) Assuming the uniqueness of the infinite cluster of $\mathcal{V}$, one could be interested in the decay of the probability that $x$ and $y$ in $\mathbb{R}^d$ are connected through a bounded component of $\mathcal{V}$ as the distance between $x$ and $y$ diverges.
Similar questions have been answered in a rather satisfactory way for the case of Bernoulli percolation, see for instance Theorem~(8.18) of \cite{Gri99}.
\end{remark}

\section*{Acknowledgements}
We thank Johan Tykesson for some helpful discussion about the model.
Part of the collaboration for this work took place during the 2011 summer program at IMPA - Rio de Janeiro.
We acknowledge all the staff for the nice environment provided.
This work had partial financial support from CNPq grant 140532/2007-2 (M{.}R{.}H{.}) and the AXA Research Fund (A{.}T{.}).

\newpage

\appendix
\section*{Appendix}
\label{s:appendix}

\renewcommand{\theequation}{A.\arabic{equation}}
\renewcommand{\thetheorem}{A.\arabic{theorem}}
\setcounter{theorem}{0}
\setcounter{equation}{0}

In what follows, we give the proofs of Lemmas~\ref{l:cove_1}, \ref{l:cove_2} and \ref{l:cove_3}.
They all refer to properties of the sets $\mathcal{H}^i_n$ which were used in previous sections.
Although the proofs of these properties are elementary, we include them here for the readers convenience and specially to emphasize the fact that the constants $\useconstant{entrop}$ and $\useconstant{secant}$ do not depend on the specific choice of the scale parameter $a_0 \geq 288^6$.

Recall the definition of the scale sequence $(a_n)_{n \geq 1}$
\begin{equation}
\label{e:new_scales}
a_0 \in [288^6, \infty), \text{ and } a_n = a^{\gamma^n} \text{for $n \geq 1$},
\end{equation}
where, $\gamma = 7/6$ (as it was fixed in \eqref{e:gamma}).
Recall from \eqref{e:an_ratio} and $a_0 \geq 288^6$ that
\begin{equation}
\label{e:a_lower}
a_n \geq 8000 \quad \text{and} \quad a_{n+1} \geq 288 \, a_{n}, \text{ for every $n \geq 0$}.
\end{equation}

It is important to notice that we consider $a_0$ as a variable. More precisely, all the statements of the lemmas below will hold for any $a_0$ as in \eqref{e:new_scales}. In accordance with our convention on the use of constants, $\useconstant{entrop}$ and $\useconstant{secant}$ are independent of $a_0$.

\begin{proof}[Proof of Lemma~\ref{l:cove_1}]
Take $x_0 \in \hex$ and $y \in \partial S \left( x_0, (i+1)a_n/6 \right)$. By taking some $x \in \mathcal{J}_n$ such that $d(x,y) \leq a_{n-1}/10$, we conclude that $x$ also belongs to $\mathcal{H}_n^i$. This implies that $y$ belongs to $\bigcup_{x \in \mathcal{H}_n^i} $ $S (x, a_{n-1}/10)$, finishing the proof of the lemma.
\end{proof}

\medskip

\begin{proof}[Proof of Lemma~\ref{l:cove_2}]
For a fixed $x_0 \in \hex$, we can split the circle $\partial S \left( x_0, (i+1) a_n / 6 \right)$ into no more than $\lceil 2 \pi \left( {a_n} / {a_{n-1}} \right) \rceil$ arcs having diameter not greater than $a_{n-1}$. Let us denote those arcs by $\{S_k\}_{k=1}^{\lceil 2 \pi \left(a_n/a_{n-1}\right)\rceil}$.

For each $k = 1, \dots, \lceil 2 \pi \left(a_n/a_{n-1}\right)\rceil$, let $B(S_k,4000) = \cup_{x \in S_k} S(x,4000)$. Note that $\{B(S_k, 4000)\}_{k = 1}^{\lceil 2 \pi \left(a_n/a_{n-1}\right)\rceil}$ form a covering of the union $\cup_{x_0 \in \hex} \partial S \left( x_0, (i+1) a_n /6 \right)$ with sets having diameter not larger than $a_{n-1} + 8000$, which by \eqref{e:a_lower} is smaller or equal to $2 a_{n-1}$.

By noting that, for a properly chosen constant $\useconstant{entrop} > 0$ (which is independent of $a_0$), any set $B$ with diameter smaller or equal to $2 a_{n-1}$ can intersect at most $\useconstant{entrop}$ balls in $\{B(x, a_{n-1})\}_{x \in \mathcal{J}_n}$, we finish the proof of the lemma.
\end{proof}

\medskip

\begin{proof}[Proof of Lemma~\ref{l:cove_3}]
For $m =1, 2$ we denote by $B_m$ the orthogonal projection $\pi(C_m)$ of $C_m$ into $\mathbb{R}^2$ and by $l_m$ the projection of the central axis of $C_m$ into $\mathbb{R}^2$.
Generically, $l_m$ will be a line, however if it happens to be a single point, we take $l_m$ to be an arbitrary line containing this point.

The set appearing in (\ref{eq:cove_3}) is contained in $\cup_{m = 1,2} \{x \in \mathcal{H}_n^i; ~ S(x, a_{n-1}/10) \cap B_m \neq \emptyset \}$ and, for each $m \in \{1,2\}$, the corresponding set appearing in this last union
\begin{equation*}
\begin{split}
& \Big\{ x \in \mathcal{J}_n; ~ S(x, \tfrac{a_{n-1}}{10}) \cap B_m \neq \emptyset \text{ and } S(x,a_{n-1}) \cap {\textstyle \bigcup \limits_{x_0 \in \hex}} \partial S \big(x_0, \tfrac{(i+1) a_{n-1}}6 \big) \neq \emptyset \Big\} \\
& \overset{\eqref{e:a_lower}}\subseteq \Big\{x \in \mathcal{J}_n; ~ S(x; a_{n-1}) \text{ intersects both } l_m \text{ and } {\textstyle \bigcup \limits_{x_0 \in \hex}} \partial S \big(x_0, \tfrac{(i+1)a_n}6 \big) \Big\}.
\end{split}
\end{equation*}
We denote by $0$ the origin in $\mathbb{R}^2$. Using again \eqref{e:a_lower}, we conclude that the set above is contained in
\begin{equation}
 \label{eq:cove_3_1}
 \big\{x \in \mathcal{J}_n;~ S(x, 2a_{n-1}) \text{ intersects both } l_m \text{ and } \partial S \big(0, \tfrac{(i+1)a_n}6 \big) \big\}
\end{equation}

Note that if $S(x, 2a_{n-1})$ intersects $\partial S (0, (i+1)a_n/6)$, then it is contained in the annulus
\begin{equation}
\label{eq:annu}
A_n^i := S \left(0, \tfrac{(i+1)a_n}{6}  + 4 a_{n-1} \right) {\Big \backslash} S \left(0, \tfrac{(i+1)a_n}{6} - 4 a_{n-1} \right).
\end{equation}
Thus the set appearing in (\ref{eq:cove_3_1}) is contained in
\begin{equation}
\label{eq:cove_3_2}
\left\{ x \in \mathcal{J}_n ; ~ S(x, 2 a_{n-1}) \cap l_m \cap A_n^i \neq \emptyset \right\}.
\end{equation}
All we have to do now it to bound the size of the set above for well chosen indices $i_1$ and $i_2$ in $\{1,\dots, 4\}$.

In order to analyze the cardinality of the set appearing in (\ref{eq:cove_3_2}) we study the intersection of the line $l_n$ and the annulus $A_n^i$. For that it will be convenient to introduce the distance $d_l$ between a line $l \subset \mathbb{R}^2$ and the origin $0$. For a given $l$, $d_l$ can only belong to at most one of the following intervals:
\begin{equation*}
 \left[\tfrac{3 a_n}{12}, \tfrac{5a_n}{12}\right), \left[\tfrac{5a_n}{12},\tfrac{7a_n}{12}\right), \left[\tfrac{7a_n}{12},\tfrac{9a_n}{12}\right), \text{ and } \left[\tfrac{9a_n}{12},\tfrac{11a_n}{12}\right).
\end{equation*}
Thus we can choose two distinct indices $i_1$ and $i_2$ in $\{1, \ldots, 4\}$ such that
\begin{equation}
 \label{eq:cove_3_3}
 d_{l_m} \notin \left[\tfrac {(2 i_j + 1) a_n}{12}, \tfrac{(2i_j+3)a_n}{12}\right),
\end{equation}
for $m,j = 1,2$.

For fixed $m, j \in \{1,2\}$ we have, of course, two possibilities. Either
\begin{gather}
 \label{eq:cove_3_4}
d_{l_m} \geq \frac{2i_j+3}{12} ~ a_n, \;\; \text{ or}\\
 \label{eq:cove_3_5}
d_{l_m} < \frac{2i_j+1}{12} ~ a_n.
\end{gather}
In the case \eqref{eq:cove_3_4}, $l_m$ does not intersect $S(0, (2i_j+3) a_n /12)$. 
Since, by \eqref{e:a_lower} we have that $A_n^{i_j} \subset S(0, (2i_j +3) a_n/12)$, we can see that the set appearing in \eqref{eq:cove_3_2} is empty and then there is nothing to be proved.

We now turn our attention to the case \eqref{eq:cove_3_5}. 
In this case, $l_m$ intersects the ball $S(0, (2i_j+1) a_n/12)$ and (again by \eqref{e:a_lower}) it must also intersects the inner ball of the annulus $A_n^{i_j}$, hence
\begin{equation}
\label{eq:cove_3_6}
\text{$l_m \cap A_n^{i_j}$ is given by the union of two segments of length } Rf\left(\tfrac{d_{l_m}}{R}, \tfrac{4 a_{n-1}}{R}\right),
\end{equation}
where $R = (i_j + 1) a_n /6$ and $f(x,\epsilon) = (1+\epsilon)\sqrt{(1+\epsilon) - x^2} - (1 - \epsilon) \sqrt{(1-\epsilon) - x^2}$, for all $|\epsilon| \leq 1/24$ and all $|x| \leq 11/12$ (note that $d_{l_m}/R \leq \big((2i_j+1)/{12} \big) ({i_j+1}/{6})^{-1} \leq 11/12$ and, by \eqref{e:a_lower}, $4 a_{n-1} /R \leq 1/24$).

The function $f$ vanishes at $x = 0$ and has bounded derivatives in $[-11/12,11/12] \times [-1/24, 1/24]$. This implies that
\begin{equation}
 \label{eq:cove_3_7}
Rf\left(\tfrac{d_{l_m}}{R}, \tfrac{4 a_{n-1}}{R}\right) \leq R \sup_{ \substack{|x| \leq 11/12 \\ |\epsilon| \leq 1/24}} \left( \frac{\partial f}{\partial \epsilon} (x, \epsilon) \right) ~ \frac{4a_{n-1}}{R} \leq c a_{n-1},
\end{equation}
uniformly for $n \geq 1$.

Using \eqref{eq:cove_3_2} and \eqref{eq:cove_3_6} the bound in \eqref{eq:cove_3_7} implies that
\begin{equation}
\label{eq:cove_3_8}
\begin{split}
& \left| \left\{ x \in \mathcal{H}_n^i ; ~ [0,1000] \times S \left( x, \tfrac{a_{n-1}}{10} \right) \cap C_m \neq \emptyset \right\} \right| \leq \\
& \sup_{h_1, h_2} \big| \left\{ x \in \mathcal{J}_n ; ~S(x, 2 a_{n-1}) \cap (h_1 \cup h_2) \neq \emptyset \right\} \big|,
\end{split}
\end{equation}
where the supremum above is taken over all possible pairs of segments with length smaller or equal to $c a_{n-1}$. 
The proof of the lemma is concluded by observing that the above supremum can be easily bounded by some constant $\useconstant{secant}$.
\end{proof}

\bibliographystyle{plain}
\bibliography{all}

\end{document}